\newcommand\Dynkindots{\hbox to 2em{.\hss.\hss.}}
\def\DynkinNodeSize{1.5mm}
\def\DynkinDoubleArrowLength{3.25mm}
\def\DynkinTripleArrowLength{3.5mm}
\tikzset{
  dnode/.style={
    circle,
    inner sep=0pt,
    minimum size=\DynkinNodeSize,
    fill=white,
    draw},
  bnode/.style={
    circle,
    inner sep=0pt,
    minimum size=\DynkinNodeSize,
    fill=black,
    draw},
  middlearrow/.style={
    decoration={markings,
      mark=at position 0.7 with
      {\draw (0:0mm) -- +(+160:\DynkinDoubleArrowLength); \draw (0:0mm) -- +(-160:\DynkinDoubleArrowLength);},
    },
    postaction={decorate}
  },
  triplemiddlearrow/.style={
    decoration={markings,
      mark=at position 0.7 with
      {\draw (0:0mm) -- +(+160:\DynkinTripleArrowLength); \draw (0:0mm) -- +(-160:\DynkinTripleArrowLength);},
    },
    postaction={decorate}
  },	
  sedge/.style={
  },
  dedge/.style={
    middlearrow,
    double distance=1mm,
  },
  tedge/.style={
    triplemiddlearrow,
    double distance=1.0mm+\pgflinewidth,
    postaction={draw},
  },
}
\title[On the values of unipotent characters in type \texorpdfstring{$E_7$}{E7}]{On the values of unipotent characters of finite Chevalley groups of type \texorpdfstring{$E_7$}{E7} in characteristic 2}
\author{Jonas Hetz}
\address{IAZ - Lehrstuhl für Algebra, Universität Stuttgart, Pfaffenwaldring 57, D--70569 Stuttgart, Germany}
\email{jonas.hetz@mathematik.uni-stuttgart.de}
\newtheorem{Thm}{Theorem}[section]
\newtheorem{Prop}[Thm]{Proposition}
\newtheorem{Lm}[Thm]{Lemma}
\theoremstyle{definition}
\newtheorem{Rem}[Thm]{Remark}
\newtheorem{Empty}[Thm]{}
\numberwithin{equation}{section}
\newcommand{\mf}{\mathbf}
\newcommand{\rom}[1]{\uppercase\expandafter{\romannumeral#1}}
\DeclareMathOperator{\Irr}{Irr}
\DeclareMathOperator{\Tr}{Tr}
\DeclareMathOperator{\Trace}{Trace}
\DeclareMathOperator{\ind}{ind}
\DeclareMathOperator{\CF}{CF}
\DeclareMathOperator{\rank}{rank}
\DeclareMathOperator{\End}{End}
\DeclareMathOperator{\Hom}{Hom}
\DeclareMathOperator{\IC}{IC}
\DeclareMathOperator{\Uch}{Uch}
\DeclareMathOperator{\supp}{supp}
\begin{document}
\begin{abstract}
Let $G$ be a finite group of Lie type. In order to determine the character table of $G$, Lusztig developed the theory of character sheaves. In this framework, one has to find the transformation between two bases for the space of class functions on $G$, one of them being the irreducible characters of $G$, the other one consisting of characteristic functions associated to character sheaves. In principle, this has been achieved by Lusztig and Shoji, but the underlying process involves some scalars which are still unknown in many cases. The problem of specifying these scalars can be reduced to considering cuspidal character sheaves. We will deal with the latter for the specific case where $G=E_7(q)$, and $q$ is a power of the bad prime $p=2$ for $E_7$.
\end{abstract}

\subjclass[2020]{Primary 20C33; Secondary 20G40}
\keywords{Finite groups of Lie type, Unipotent characters, Character sheaves}

\maketitle

\section{Introduction}\label{Intro}

Let $\mf G$ be a connected reductive algebraic group over the algebraic closure $k=\overline{\mathbb F}_p$ of the field with $p$ elements (for a prime $p$). Assume that $\mf G$ is defined over the finite subfield $\mathbb F_q$ of $k$, where $q$ is a power of $p$, so the $\mathbb F_q$-rational points on $\mf G$ constitute the corresponding finite group of Lie type $\mf G(q)=\mf G(\mathbb F_q)$. The theory of character sheaves, due to Lusztig \cite{LuCS1}-\cite{LuCS5}, provides a general procedure by which the problem of determining the character table of $\mf G(q)$ can be tackled. Character sheaves are certain geometric analogues to the irreducible representations of $\mf G(q)$, and they give rise to class functions on $\mf G(q)$, called characteristic functions. Lusztig (\cite[13.7]{Luchars}, \cite{LuCS5}) conjectured that any such characteristic function coincides up to multiplication by a non-zero scalar with an appropriate almost character of $\mf G(q)$, that is, an explicitly known linear combination of the irreducible characters. Then he provides an algorithm to compute the characteristic functions, at least in principle (\cite[\S 24]{LuCS5}). While Lusztig's conjecture is still open in general (see, e.g., \cite{LuDAC}), it has been proven by Shoji (\cite{Sh1}, \cite{Sh2}) under the assumption that the centre of $\mf G$ is connected. Even in this case, however, the scalars relating characteristic functions of character sheaves and almost characters need to be specified. Following \cite[\S 3]{Luvaluni}, this task can be reduced to considering cuspidal character sheaves, by means of an induction process for these objects which leads to a parametrisation of the character sheaves on $\mf G$, rather analogous to Harish-Chandra theory for irreducible characters of $\mf G(q)$.

It therefore seems reasonable to consider the individual quasisimple groups of Lie type separately. In those cases, \cite{LuIC} combined with \cite{LuCS1}-\cite{LuCS5}, see also \cite[\S 7]{Sh1}, \cite[\S 5]{Sh2}, provides a classification of cuspidal character sheaves. We will be concerned with unipotent character sheaves on $\mf G$, that is, the subset of the character sheaves on $\mf G$ corresponding to the unipotent characters of $\mf G(q)$ due to Deligne-Lusztig \cite{DL}.

For classical groups of split type, Shoji \cite{Shvaluniclass}, \cite{Shvaluniclass2} has shown that the characteristic functions of cuspidal unipotent character sheaves coincide with the corresponding almost characters. As far as exceptional groups are concerned, this statement is only partially verified. E.g., for groups of type $F_4$, it has been proven by Marcelo-Shinoda \cite{MaShi} for those cuspidal character sheaves whose support contains unipotent elements, in all characteristics. (In type $F_4$, any characteristic, every cuspidal character sheaf is unipotent.) The particular case $F_4$, $p=2$, is in fact complete, as the respective identities for not unipotently supported cuspidal character sheaves have been verified in \cite[\S 5]{Gvaluni}.

Moreover, by a result of Geck \cite[Proposition 3.4]{Gvaluni}, the computation of the scalars relating characteristic functions of unipotent character sheaves and unipotent almost characters can be reduced to the base case $p=q$. This is particularly useful in small characteristics, as one might hope to settle such cases via direct computations, e.g. by applying computer algebra methods. Several concrete examples are considered in \cite{Gvaluni}, such as the above-mentioned $F_4$, $p=2$, as well as the split adjoint group of type $E_6$ in characteristic $2$, where the problem is solved for all cuspidal character sheaves (there are two of them, and both are unipotent). Note that, in either of these cases, the character table of $\mf G(2)$ is known. However, it is also conjectured in \cite[6.6]{Gvaluni} that an analogous statement will hold for the adjoint groups of type $E_6$ in good characteristic, even though the character table of $\mf G(p)$ is not known here. Finally, \cite{HE6p3} deals with $E_6$, $p=3$ (both split and non-split).

In this paper, we shall investigate the case where $\mf G$ is a simple Chevalley group of type $E_7$ over $k=\overline{\mathbb F}_2$, defined over $\mathbb F_q$, $q$ a power of $2$. There are two cuspidal character sheaves on $\mf G$, and both of them are unipotent \cite[Proposition 20.3]{LuCS4}. After singling out a specific normalisation of the characteristic functions of these character sheaves, we show that they coincide with the corresponding almost characters of $\mf G(q)$. In order to achieve this, we will, as in \cite{HE6p3}, consider the Hecke algebra associated to $\mf G(q)$ and apply a formula in \cite{GCH} which gives a relation between the characters of this Hecke algebra and the principal series unipotent characters of $\mf G(q)$. However, we will need to take into account some more theoretical machinery than in \cite{HE6p3}. This identification of the characteristic functions of cuspidal character sheaves with the respective almost characters immediately determines the values at the regular unipotent elements in $\mf G(q)$ of the two cuspidal unipotent characters and of the two principal series unipotent characters corresponding to the two irreducible characters of the Weyl group of degree $512$. More information on character values at unipotent elements is provided by Shoji's and Lusztig's algorithms \cite[\S 5]{ShArc}, \cite[\S24]{LuCS5} for the computation of Green functions (as defined in \cite{DL}). While these algorithms in general involve some unknown scalars, they have been determined for our case recently in \cite{GcompGreen}.

\begin{Empty}\label{Not} \textbf{Notation.}
Whenever $\mf G$ is a connected reductive algebraic group over $k=\overline{\mathbb F}_p$, we assume to have fixed a Borel subgroup $\mf B\subseteq\mf G$ as well as a maximal torus $\mf T$ of $\mf G$ which is contained in $\mf B$, and we denote by $\mf W:=N_{\mf G}(\mf T)/{\mf T}$ the Weyl group of $\mf G$ with respect to $\mf T$. Let $\mf U:=R_u(\mf B)$ be the unipotent radical of $\mf B$. Then $\mf B$ is the semidirect product of $\mf U$ and $\mf T$ (with $\mf U$ being normal in $\mf B$).

If, in addition, $\mf G$ is defined over a finite subfield $\mathbb F_q\subseteq k$ (where $q$ is a power of $p$) with Frobenius map $F\colon\mf G\rightarrow\mf G$, we write $\mf G(q)=\mf G^F=\{g\in\mf G\mid F(g)=g\}$ for the corresponding finite group of Lie type. In this case, unless otherwise stated, we will also tacitly assume that both $\mf B$ and $\mf T$ are $F$-stable. Thus $F$ induces an automorphism of $\mf W$ which we shall denote again by $F$.

We will be concerned with characters of $\mf G^F$ in characteristic $0$. As usual in the ordinary representation theory of finite groups of Lie type, we consider representations and characters over $\overline{\mathbb Q}_\ell$, an algebraic closure of the field of $\ell$-adic numbers, for a fixed prime $\ell$ different from $p$. Thus given a finite group $\Gamma$, let $\CF(\Gamma)$ be the set of class functions $\Gamma\rightarrow\overline{\mathbb Q}_\ell$ and denote by
\[\langle f, f'\rangle_{\Gamma}:=|\Gamma|^{-1}\sum_{g\in\Gamma}f(g)\overline{f'(g)}\quad (\text{for }f, f'\in\CF(\Gamma))\]
the standard scalar product on $\CF(\Gamma)$, where bar denotes a field automorphism of $\overline{\mathbb Q}_\ell$ which maps roots of unity to their inverses. Let $\Irr(\Gamma)\subseteq\CF(\Gamma)$ be the subset of irreducible characters of $\Gamma$. They form an orthonormal basis of $\CF(\Gamma)$ with respect to this scalar product. If $\Gamma=\mf G^F$, we denote by $\Uch(\mf G^F)\subseteq\Irr(\mf G^F)$ the subset of unipotent characters, that is, those $\rho\in\Irr(\mf G^F)$ which satisfy $\langle\rho,R_w\rangle\neq0$ for some $w\in\mf W$. Here, $R_w$ is the virtual character defined by Deligne and Lusztig in \cite{DL}.
\end{Empty}

\section{Character sheaves}\label{CS}

Let $\mf G$ be a connected reductive algebraic group over $k=\overline{\mathbb F}_p$. We begin by very briefly introducing some notions of the theory of character sheaves (only those which are relevant for our purposes), following \cite{LuCS1}-\cite{LuCS5}, \cite{LuIntroCS}, \cite{LuclCS}.
\begin{Empty}\label{CSwoFrob}
Denote by $\mathscr D\mf G$ the bounded derived category of constructible $\overline{\mathbb Q}_\ell$-sheaves on $\mf G$, and by $\mathscr M\mf G$ the full subcategory of perverse sheaves on $\mf G$ in the sense of \cite{BBD}. $\mathscr M\mf G$ is an abelian category. The character sheaves on $\mf G$ are defined (\cite[2.10]{LuCS1}) as certain simple objects of $\mathscr M\mf G$ which are equivariant for the conjugation action of $\mf G$ on itself. We fix a set of representatives $\hat{\mf G}$ for the isomorphism classes of character sheaves on $\mf G$. 

An important subset of the character sheaves on $\mf G$ are the so-called unipotent character sheaves, defined as follows. For $w\in\mf W$, let $K_w:=K_w^{{\mathscr L}_0}\in\mathscr D\mf G$ be as defined in \cite[2.4]{LuCS1} with respect to the constant local system $\mathscr L_0=\overline{\mathbb Q}_\ell$ on $\mf T$. An element of $\hat{\mf G}$ is called a unipotent character sheaf if it is a constituent of a perverse cohomology sheaf $\leftidx{^p}{\!H}{^i}(K_w)$ for some $i\in\mathbb Z$, $w\in\mf W$. We denote by
\[{\hat{\mf G}}^\mathrm{un}:=\{A\in\hat{\mf G}\mid A\text{ unipotent}\}\subseteq\hat{\mf G}\]
the unipotent character sheaves in $\hat{\mf G}$. We also set (see \cite[14.10]{LuCS3})
\[R_\phi^{\mathscr L_0}:=\frac1{|\mf W|}\sum_{w\in\mf W}\phi(w)\sum_{i\in\mathbb Z}{{(-1)}^{i+\dim\mf G}}\, {\leftidx{^p}{\!H}{^i}(K_w)}\quad\text{for }\phi\in\Irr(\mf W),\]
an element of the subgroup of the Grothendieck group of $\mathscr M\mf G$ spanned by the character sheaves. Denote by $(\;:\;)$ the symmetric $\overline{\mathbb Q}_\ell$-bilinear pairing on this subgroup such that for any two character sheaves $A$, $A'$ on $\mf G$ we have
\[(A:A')=\begin{cases}1 \quad\text{if}\quad A\cong A'\\ 0 \quad\text{if}\quad A\ncong A'.\end{cases}\]
Finally, there is the notion of cuspidal character sheaves on a connected reductive group, see \cite[3.10]{LuCS1}. By \cite[Proposition 3.12]{LuCS1}, any cuspidal character sheaf on $\mf G$ has the form $\IC(\overline\Sigma,\mathscr E)[\dim\Sigma]$, where \enquote{IC} stands for the intersection cohomology complex due to Deligne-Goresky-MacPherson (\cite{GMP}, \cite{BBD}), $\Sigma$ is the inverse image of a conjugacy class under the natural map $\mf G\rightarrow\mf G/{\mf Z(\mf G)^\circ}$, and $\mathscr E$ is a $\mf G$-equivariant irreducible $\overline{\mathbb Q}_\ell$-local system on $\Sigma$, such that $(\Sigma,\mathscr E)$ is a cuspidal pair for $\mf G$ in the sense of \cite{LuIC}. We set
\[{\hat{\mf G}}^\circ:=\{A\in\hat{\mf G}\mid A\text{ cuspidal character sheaf}\}\quad\text{and}\quad\hat{\mf G}^{\circ,\mathrm{un}}:=\hat{\mf G}^\mathrm{un}\cap\hat{\mf G}^\circ.\]
This gives rise to an inductive description of (unipotent) character sheaves, as follows. Let $\mf L\subseteq\mf G$ be a Levi complement of some parabolic subgroup $\mf P\subseteq\mf G$. To each complex $K\in\mathscr M\mf L$ which is equivariant for the conjugation action of $\mf L$ on itself is associated an induced complex $\ind_{\mf L\subseteq\mf P}^{\mf G}(K)\in\mathscr D\mf G$, see \cite[4.1]{LuCS1}. If $A_0\in\hat{\mf L}^\circ$, the complex $\ind_{\mf L\subseteq\mf P}^{\mf G}(A_0)$ is a semisimple perverse sheaf on $\mf G$, all its simple direct summands are character sheaves on $\mf G$, and any character sheaf on $\mf G$ is a simple direct summand of $\ind_{\mf L\subseteq\mf P}^{\mf G}(A_0)$ for some $\mf L\subseteq\mf P$ as above, and some $A_0\in\hat{\mf L}^\circ$. (In fact, for the latter statement it is enough to consider standard Levi subgroups $\mf L$ of standard parabolic subgroups $\mf P$. We will use this in \Cref{ParHC} below.) Moreover, $A_0$ is unipotent if and only if some (or equivalently, every) simple summand of $\ind_{\mf L\subseteq\mf P}^{\mf G}(A_0)$ is unipotent (\cite[4.4, 4.8]{LuCS1}). In this way, the character sheaves on $\mf G$ correspond to irreducible modules for the endomorphism algebras (in $\mathscr M\mf G$) of various induced complexes as above, using the respective Hom functor (see \cite[\S 10]{LuCS2}).
\end{Empty}
\begin{Prop}[{\cite[\S 10]{LuCS2}}, {\cite[3.8]{LuRCS}}, {\cite[5.9]{Sh1}}]\label{LusztigParCSh}
Let $\mf L\subseteq\mf G$ be a Levi complement of the parabolic subgroup $\mf P\subseteq\mf G$. Assume that $\hat{\mf L}^{\circ,\mathrm{un}}$ is non-empty and fix some $A_0\in\hat{\mf L}^{\circ,\mathrm{un}}$. Then the endomorphism algebra $\End_{\mathscr M\mf G}\bigl(\ind_{\mf L\subseteq\mf P}^{\mf G}(A_0)\bigr)$ is isomorphic to the group algebra $\overline{\mathbb Q}_\ell[W_{\mf G}(\mf L)]$, where $W_{\mf G}(\mf L):=N_{\mf G}(\mf L)/{\mf L}$ is the relative Weyl group of $\mf L$ in $\mf G$. Hence, we obtain a bijection
\[\Irr(W_{\mf G}(\mf L))\xrightarrow{1-1}\left\{A\in\hat{\mf G}\mid A\text{ is a simple direct summand of }\ind_{\mf L\subseteq\mf P}^{\mf G}(A_0)\right\}.\]
\end{Prop}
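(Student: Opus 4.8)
The plan is to derive the statement from Lusztig's analysis of induced cuspidal complexes in \cite[\S 10]{LuCS2}, together with the classification of cuspidal pairs in \cite{LuIC} and the refinements of \cite[3.8]{LuRCS} and \cite[5.9]{Sh1}; I sketch the main line and indicate where the real work sits. Write $K:=\ind_{\mf L\subseteq\mf P}^{\mf G}(A_0)$ and $\mathcal A:=\End_{\mathscr M\mf G}(K)$. By \Cref{CSwoFrob}, $K$ is a semisimple perverse sheaf all of whose simple summands are (unipotent) character sheaves, so $\mathcal A$ is a finite-dimensional semisimple $\overline{\mathbb Q}_\ell$-algebra and $B\mapsto\Hom_{\mathscr M\mf G}(B,K)$ induces a bijection between the simple summands of $K$ and $\Irr(\mathcal A)$. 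Thus it is enough to produce an isomorphism $\mathcal A\cong\overline{\mathbb Q}_\ell[W_{\mf G}(\mf L)]$, for then $\Irr(\mathcal A)$ is canonically identified with $\Irr(W_{\mf G}(\mf L))$ and the asserted bijection follows.

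To obtain such an isomorphism I would first construct a basis of $\mathcal A$ indexed by $W_{\mf G}(\mf L)$. Given $w\in W_{\mf G}(\mf L)$, pick a representative $n\in N_{\mf G}(\mf L)$. Since $A_0$ is cuspidal \emph{unipotent}, the underlying cuspidal pair on $\mf L$ is stable, up to isomorphism, under $N_{\mf G}(\mf L)$ --- this is part of the classification of such pairs in \cite{LuIC}, so in particular $\mathrm{ad}(n)^\ast A_0\cong A_0$ --- and $\ind_{\mf L\subseteq\mf P}^{\mf G}$ is, up to canonical isomorphism, independent of the chosen parabolic with Levi $\mf L$. Lusztig's construction in \cite[\S 10]{LuCS2} then yields an isomorphism $\theta_w\colon K\xrightarrow{\ \sim\ }K$, canonical up to a nonzero scalar; restricting $K$ to a suitable open subset of $\mf G$ and using the cleanness of the cuspidal character sheaf $A_0$ to discard boundary contributions shows that the $\theta_w$ $(w\in W_{\mf G}(\mf L))$ are linearly independent and span $\mathcal A$, so $\dim_{\overline{\mathbb Q}_\ell}\mathcal A=|W_{\mf G}(\mf L)|$. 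As $\theta_w\theta_{w'}$ is a scalar multiple of $\theta_{ww'}$, after normalising these scalars one gets $\mathcal A\cong\overline{\mathbb Q}_\ell[W_{\mf G}(\mf L)]_{c}$, a twisted group algebra for some $2$-cocycle $c$.

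The remaining --- and, I expect, genuinely hard --- point is that $c$ is a coboundary, so that $\mathcal A$ is the untwisted group algebra. For unipotent cuspidal data $W_{\mf G}(\mf L)$ is a Coxeter group (again \cite{LuIC}), and it suffices to rescale the $\theta_s$ attached to the Coxeter generators $s$ so that the braid relations hold and $\theta_s^2=\id$. These are local checks, to be performed inside the minimal Levi $\mf M\supseteq\mf L$ with $W_{\mf M}(\mf L)=\langle s\rangle$ of order $2$ (respectively the minimal rank-two Levi for a braid relation of a given type): there $\ind_{\mf L\subseteq\mf Q}^{\mf M}(A_0)$ has a small, explicitly controlled semisimple endomorphism algebra, and --- crucially --- since we work with sheaves on $\mf G$ rather than with a Hecke algebra over $\mf G^F$, no deformation parameter is present, so the only quadratic relation at our disposal is $(\theta_s-\id)(\theta_s+\id)=0$; that both eigenvalues are $\pm1$, rather than roots of unity of higher order which would force a genuine twist, is exactly what the description of the relative Weyl group of a cuspidal pair and of its reflection representation in \cite{LuIC} provides, and is the content of \cite[3.8]{LuRCS} and \cite[5.9]{Sh1}. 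This cocycle-triviality step is the substance of the argument; the rest is formal. I note that in the cases actually needed later $W_{\mf G}(\mf L)$ is moreover one of a short explicit list of Coxeter groups, so that this last verification could alternatively be carried out case by case.
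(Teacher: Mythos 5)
The paper does not prove this proposition: it is stated as a quoted result with citations to \cite[\S 10]{LuCS2}, \cite[3.8]{LuRCS}, \cite[5.9]{Sh1}, and the reader is expected to take it from the literature. Your sketch is therefore not competing with an argument given in the paper, but it does give an essentially correct outline of how the result is established in those sources: semisimplicity of $\ind_{\mf L\subseteq\mf P}^{\mf G}(A_0)$ reduces the problem to computing the endomorphism algebra; Lusztig's construction (using the $N_{\mf G}(\mf L)$-stability of the unipotent cuspidal pair, the parabolic-independence of $\ind$, and cleanness) yields a basis $\{\theta_w\}_{w\in W_{\mf G}(\mf L)}$ making the algebra a twisted group algebra; and the genuine content is the triviality of the $2$-cocycle, which reduces to rank-one and rank-two checks because $W_{\mf G}(\mf L)$ is a Coxeter group.

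One point to tighten: when you write that \emph{"the only quadratic relation at our disposal is $(\theta_s-\id)(\theta_s+\id)=0$"}, that is the conclusion, not a given. A priori the minimal polynomial of $\theta_s$ on the rank-one induced complex is $(\theta_s-\lambda)(\theta_s-\mu)$ for distinct scalars $\lambda,\mu$ (distinctness following from semisimplicity), and rescaling $\theta_s$ multiplies both eigenvalues by a common factor, leaving the ratio $\mu/\lambda$ invariant; what one must show is that this ratio equals $-1$. The observation that no $q$-deformation is present at the level of sheaves on $\mf G$ is a heuristic, not a proof, and you correctly defer the actual verification to \cite{LuIC}, \cite[3.8]{LuRCS}, \cite[5.9]{Sh1} --- this is precisely where the cited references do the work. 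With that caveat the sketch matches the standard argument, and for the purposes of the present paper the proposition is in any case treated as a black box.
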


\begin{Empty}\label{CSwithFrob}
From now until the end of this section, we assume that the connected reductive group $\mf G$ is defined over $\mathbb F_q$ (where $q$ is a power of $p$), with corresponding Frobenius map $F\colon\mf G\rightarrow\mf G$. For simplicity, we shall also assume that $F$ induces the identity on $\mf W$.

Given $A\in\mathscr D\mf G$, let $F^\ast A\in\mathscr D\mf G$ be the inverse image of $A$ under the Frobenius map $F$. Suppose that $F^\ast A$ is isomorphic to $A$ and choose an isomorphism $\varphi\colon F^\ast A\xrightarrow{\sim}A$. Then $\varphi$ induces linear maps $\varphi_{i,g}\colon \mathscr H_g^i(A)\rightarrow\mathscr H_g^i(A)$ for $i\in\mathbb Z$ and $g\in\mf G^F$, where $\mathscr H_g^i(A)$ is the stalk at $g$ of the $i$th cohomology sheaf of $A$, a $\overline{\mathbb Q}_\ell$-vector space of finite dimension. The characteristic function $\chi_{A,\varphi}\colon\mf G^F\rightarrow\overline{\mathbb Q}_\ell$ associated with $A$ (and $\varphi$) is defined by (see \cite[8.4]{LuCS2})
\[\chi_{A,\varphi}(g):=\sum_{i\in\mathbb Z}(-1)^i\Trace(\varphi_{i,g},\mathscr H_g^i(A))\quad\text{for }g\in\mf G^F.\]
This is well-defined since only finitely many of the $\mathscr H_g^i(A)$ ($i\in\mathbb Z$) are non-zero. In the case where $A$ is a $\mf G$-equivariant perverse sheaf, we have $\chi_{A,\varphi}\in\CF(\mf G^F)$. If, in addition, $A$ is a simple object of $\mathscr M\mf G$, then $\varphi$ (and, hence, $\chi_{A,\varphi}$) is unique up to a non-zero scalar. Denote by 
\[{\hat{\mf G}}^F:=\{A\in\hat{\mf G}\mid F^\ast A\cong A\}\subseteq\hat{\mf G}\]
the $F$-stable character sheaves in $\hat{\mf G}$. The following is one of the main results of \cite{LuCS5}, which we can now state without any restriction on the characteristic in view of \cite{LuclCS}.
\end{Empty}
\begin{Thm}[Lusztig {\cite[\S 25]{LuCS5}, \cite[3.10]{LuclCS}}]\label{LusztigONB}
In the setting of \ref{CSwithFrob}, there exist isomorphisms $\varphi_A\colon F^\ast A\xrightarrow{\sim}A$ (for $A\in{\hat{\mf G}}^F$) which satisfy the following conditions:
\begin{enumerate}
\item[(a)] The values of the characteristic functions $\chi_{A,\varphi_A}$ are cyclotomic integers;
\item[(b)] $\{\chi_{A,\varphi_A}\mid A\in{\hat{\mf G}}^F\}$ is an orthonormal basis of $\left(\CF(\mf G^F),\langle\;,\;\rangle_{\mf G^F}\right)$.
\end{enumerate}
\end{Thm}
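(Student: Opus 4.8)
The plan is to reconstruct Lusztig's inductive argument, which mirrors Harish-Chandra theory and reduces the whole statement to cuspidal character sheaves on Levi subgroups.

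\textbf{Step 1 (inductive set-up).} By the parametrisation of \Cref{LusztigParCSh}, every $A\in\hat{\mf G}$ occurs as a simple direct summand of $\ind_{\mf L\subseteq\mf P}^{\mf G}(A_0)$ for a standard Levi $\mf L$ of a standard parabolic $\mf P$ and some $A_0\in\hat{\mf L}^\circ$, the summands being indexed by $\Irr(W_{\mf G}(\mf L))$. Since $F$ acts trivially on $\mf W$, for $F$-stable $A$ one may take $\mf L$, $\mf P$, $A_0$ to be $F$-stable; fixing an isomorphism $\varphi_0\colon F^\ast A_0\xrightarrow{\sim}A_0$, functoriality of $\ind$ yields an isomorphism of $\ind_{\mf L\subseteq\mf P}^{\mf G}(A_0)$, and hence, via $\End_{\mathscr M\mf G}(\ind(A_0))\cong\overline{\mathbb Q}_\ell[W_{\mf G}(\mf L)]$, a distinguished isomorphism $\varphi_A$ on each summand $A$. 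The Lefschetz-trace computation underlying \cite[\S 8, \S 10]{LuCS2} then expresses $\chi_{\ind(A_0),\,\ind(\varphi_0)}$ as the group-theoretic induction from $\mf P^F$ of $\chi_{A_0,\varphi_0}$, and decomposing through $\overline{\mathbb Q}_\ell[W_{\mf G}(\mf L)]$ writes each $\chi_{A,\varphi_A}$ as an explicit $\overline{\mathbb Q}_\ell$-linear combination of ``twisted induced'' class functions built from $\chi_{A_0,\varphi_0}$ --- the exact analogue of expanding a principal-series almost character in Deligne--Lusztig characters.

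\textbf{Step 2 (orthonormality, i.e.\ (b)).} If $A$ arises from $(\mf L,A_0)$ and $A'$ from $(\mf L',A_0')$, then $\langle\chi_{A,\varphi_A},\chi_{A',\varphi_{A'}}\rangle_{\mf G^F}$ is computed, via Step 1, from $\langle\Ind_{\mf P^F}^{\mf G^F}\chi_{A_0,\varphi_0},\Ind_{\mf P'^F}^{\mf G^F}\chi_{A_0',\varphi_0'}\rangle_{\mf G^F}$, which a Mackey-type formula for induction of character sheaves (as in \cite{LuCS2}) unwinds into a sum, over double cosets conjugating one Levi towards the other, of scalar products of restrictions of the cuspidal inputs. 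Cuspidality of $A_0$, $A_0'$ --- which by \cite[Prop.~3.12]{LuCS1} and \cite{LuIC} forces their supports to be very small --- kills all these terms unless $(\mf L,A_0)$ and $(\mf L',A_0')$ are conjugate. Within a single datum, \Cref{LusztigParCSh} together with Schur orthogonality in $\Irr(W_{\mf G}(\mf L))$ shows the Gram matrix of the $\chi_{A,\varphi_A}$ is diagonal, with each diagonal entry $\langle\chi_{A,\varphi_A},\chi_{A,\varphi_A}\rangle_{\mf G^F}=|\mf G^F|^{-1}\sum_{g}|\chi_{A,\varphi_A}(g)|^2$ a totally positive cyclotomic number, non-zero because $A\neq 0$; rescaling $\varphi_A$ by its positive square root gives $\langle\chi_{A,\varphi_A},\chi_{A,\varphi_A}\rangle_{\mf G^F}=1$. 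That the resulting orthonormal system is a basis follows because its $\overline{\mathbb Q}_\ell$-span already contains every Deligne--Lusztig virtual character (part of the content of \cite{LuCS1}--\cite{LuCS5}), and those span $\CF(\mf G^F)$ by \cite{DL}.

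\textbf{Step 3 (integrality, i.e.\ (a)); the main obstacle.} The delicate point is to make the choices of Step 1 and the rescaling of Step 2 so that the values are cyclotomic \emph{integers}, not merely cyclotomic numbers. Two inputs are needed. First, after the appropriate central and degree normalisation the stalks $\mathscr H_g^i(A)$ carry Frobenius actions whose eigenvalues are roots of unity times powers of $q$ --- a consequence of the purity of $\IC$-sheaves (\cite{BBD}) and of Lusztig's analysis of the local systems that occur, developed through \cite{LuCS4}, \cite{LuCS5} --- so $\chi_{A,\varphi_A}(g)$ is integral once $\varphi_A$ is integrally normalised. Second, and this is where the real difficulty lies, for the cuspidal inputs $A_0$ one must choose $\varphi_0$ so that the scalar by which $F$ acts on $A_0$ is a root of unity; this rests on the structure theory of cuspidal pairs in \cite{LuIC}, which pins down the associated generalised Green functions explicitly enough to exhibit such a normalisation, and it is precisely here that the ``unknown scalars'' of the abstract enter. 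Once a good normalisation of the cuspidal pieces is fixed, the formula of Step 1 and the square-root rescaling of Step 2 preserve integrality, yielding (a). I expect this arithmetic normalisation of the cuspidal character sheaves on Levi subgroups to be the main obstacle --- it is exactly the computation that the case $\mf G=E_7(q)$, $p=2$ treated in the rest of this paper is designed to carry out. Finally, the removal of any restriction on $p$ (the appeal to \cite{LuclCS}) rests on the cleanness of cuspidal character sheaves in all characteristics, which guarantees that the $\IC$-stalks behave as above.
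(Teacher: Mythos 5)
This theorem appears in the paper purely as a citation of Lusztig (\cite[\S 25]{LuCS5}, \cite[3.10]{LuclCS}); the paper gives no proof of it, so there is no in-paper argument to compare with. Viewed as a reconstruction of Lusztig's argument from the cited sources, your Steps 1 and 2 are a reasonable high-level outline, but the ``rescale $\varphi_A$ by the positive square root of $\langle\chi_{A,\varphi_A},\chi_{A,\varphi_A}\rangle_{\mf G^F}$'' move in Step 2 is not how Lusztig normalises. The conditions of \cite[25.1]{LuCS5} pin down $\varphi_A$ by prescribing the Frobenius action on a distinguished stalk to be a specific power of $\sqrt q$ times a root of unity (exactly the kind of normalisation the paper carries out explicitly in \ref{valcuspchar} and the proof of \Cref{Rx0u0}); the unit norm of $\chi_{A,\varphi_A}$ is then a theorem, not a definition, and your square-root rescaling would in any case only determine $\varphi_A$ up to a root of unity, which is precisely the residual ambiguity the paper remarks on right after the theorem.

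Step 3 contains a genuine conceptual error. The ``unknown scalars'' of the abstract are the $\xi_x$ of \eqref{scalars}, which measure the discrepancy between the characteristic functions $\chi_{A_x}$ and the almost characters $R_x$; determining them is the content of \Cref{Shoji} together with the case-by-case work of this paper. Those scalars sit entirely downstream of \Cref{LusztigONB}: Lusztig's theorem is proved in full generality with no case-by-case input, and the computations leading to \Cref{scalar1} for $E_7$, $p=2$ invoke the theorem as an established black box (for instance, via $\langle\chi_{A_{x_0},\varphi_{A_{x_0}}},\chi_{A_{x_0},\varphi_{A_{x_0}}}\rangle_{\mf G^F}=1$ in the proof of \Cref{Rx0u0}) rather than supplying ingredients for its proof. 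Your claim that ``the arithmetic normalisation of the cuspidal character sheaves $\ldots$ is exactly the computation that the case $\mf G=E_7(q)$, $p=2$ treated in the rest of this paper is designed to carry out'' inverts this logical dependency and would make the whole argument circular.
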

The required properties for the isomorphisms $\varphi_A\colon F^\ast A\xrightarrow{\sim}A$ (formulated in \cite[25.1]{LuCS5}, \cite[13.8]{LuCS3}) determine $\varphi_A$ up to multiplication by a root of unity. Sometimes, for $A\in{\hat{\mf G}}^F$, it will be convenient to just write $\chi_A$ for a characteristic function associated to the character sheaf $A$, without referring to a specific isomorphism $\varphi_A\colon F^\ast A\xrightarrow{\sim}A$. Whenever we do this, we tacitly assume that $\chi_A=\chi_{A,\varphi_A}$ for a(ny) chosen isomorphism $\varphi_A\colon F^\ast A\xrightarrow{\sim}A$ which satisfies these properties.

\section{Parametrisation of unipotent characters and unipotent character sheaves}\label{ParUchUSh}

$\mf G$ is a connected reductive algebraic group over $k=\overline{\mathbb F}_p$, defined over $\mathbb F_q$ with Frobenius map $F\colon\mf G\rightarrow\mf G$. In this section, we assume in addition that the centre of $\mf G$ is connected and that the map induced by $F$ on $\mf W$ is the identity. This assumption ensures that all unipotent character sheaves on $\mf G$ are $F$-stable. In \Cref{ParHC} below, we will further restrict to the case where $\mf G$ is the simple adjoint group of type $E_7$. There are two natural ways to give \enquote{parallel} parametrisations of unipotent characters of $\mf G^F$ and of ($F$-stable) unipotent character sheaves on $\mf G$. We will see however that they are compatible in the case of $E_7$.
\begin{Empty}\label{ParFamUch}
According to \cite[Main Theorem 4.23]{Luchars}, $\Uch(\mf G^F)$ can be classified in terms of the following data, which only depend on $\mf W$, and not on $p$ or $q$. First, Lusztig describes a partition of the irreducible characters of $\mf W$ into families. To each such family $\mathscr F\subseteq\Irr(\mf W)$ is associated a certain finite group $\mathscr G=\mathscr G_{\mathscr F}$ and a set $\mathscr M(\mathscr G)$ consisting of all pairs $(g,\sigma)$, $g\in\mathscr G$, $\sigma\in\Irr(C_{\mathscr G}(g))$, modulo the relation $(g,\sigma)\sim(hgh^{-1},\leftidx{^h}\sigma)$ for $h\in\mathscr G$, where $\leftidx{^h}\sigma$ is the irreducible character of $C_{\mathscr G}(hgh^{-1})=hC_{\mathscr G}(g)h^{-1}$ given by composing $\sigma$ with conjugation by $h^{-1}$. $\mathscr M(\mathscr G)$ is equipped with a pairing $\{\;,\;\}\colon\mathscr M(\mathscr G)\times\mathscr M(\mathscr G)\rightarrow\overline{\mathbb Q}_\ell$, defined by
\[\big\{(g,\sigma),(h,\tau)\big\}:=\frac1{|C_{\mathscr G}(g)|}\frac1{|C_{\mathscr G}(h)|}\sum_{\substack{x\in\mathscr G \\ gxhx^{-1}=xhx^{-1}g}}\tau(x^{-1}g^{-1}x)\sigma(xhx^{-1}),\]
for $(g,\sigma), (h,\tau)\in\mathscr M(\mathscr G)$. Given a family $\mathscr F\subseteq\Irr(\mf W)$, let $\mathscr F\hookrightarrow\mathscr M(\mathscr G_{\mathscr F})$ be the embedding defined in \cite[4.14]{Luchars}. Setting 
\begin{equation}\label{ParSet}
\overline X(\mf W):=\coprod_{\substack{\mathscr F\subseteq\Irr(\mf W) \\ \text{family}}}\mathscr M(\mathscr G_\mathscr F),
\end{equation}
we thus have an embedding
\begin{equation}\label{IrrWembed}
\Irr(\mf W)\hookrightarrow\overline X(\mf W),\quad \phi\mapsto x_\phi.
\end{equation}
The pairing $\{\;,\;\}$ is extended to $\overline X(\mf W)$ such that $\{(g,\sigma),(h,\tau)\}:=0$ whenever $(g,\sigma), (h,\tau)\in\overline X(\mf W)$ are not in the same $\mathscr M(\mathscr G_\mathscr F)$. Then $\overline X(\mf W)$ parametrises $\Uch(\mf G^F)$ as follows. For $\phi\in\Irr(\mf W)$, let
\[R_\phi:=\frac 1{|\mf W|}\sum_{w\in\mf W}\phi(w)R_w.\]
Then there is a bijection
\begin{equation}\label{ParUch}
\Uch(\mf G^F)\xrightarrow{\sim}\overline X(\mf W),\quad\rho\mapsto\overline x_\rho,
\end{equation}
such that for any $\rho\in\Uch(\mf G^F)$ and any $\phi\in\Irr(\mf W)$, we have
\begin{equation}\label{MultUch}
\langle\rho,R_\phi\rangle_{\mf G^F}=\Delta(\overline x_\rho)\{\overline x_\rho,x_\phi\}.
\end{equation}
Here, $\Delta(\overline x_\rho)\in\{\pm1\}$ is a sign attached to $\rho\in\Uch(\mf G^F)$, see \cite[4.21]{Luchars}. The bijection \eqref{ParUch} is not uniquely determined by this property. In the case where $\mf G$ is of type $E_7$, we will make a definite choice in \Cref{Parcompatible} below. Next, for $x\in\overline X(\mf W)$, a unipotent \enquote{almost character} $R_x\in\CF(\mf G^F)$ is defined by
\[R_x:=\sum_{\rho\in\Uch(\mf G^F)}\{\overline x_\rho,x\}\Delta(\overline x_\rho)\rho.\]
We then have $R_\phi=R_{x_\phi}$ for any $\phi\in\Irr(\mf W)$. Since the \enquote{Fourier matrix} $\Upsilon:=(\{x,x'\})_{x,x'\in\overline X(\mf W)}$ is hermitian and $\Upsilon^2$ is the identity matrix \cite[\S 4]{LuUniE8}, we obtain
\[\langle R_x,R_{x'}\rangle_{\mf G^F}=\begin{cases}1&\text{ if }x=x',\\0&\text{ if }x\neq x'\end{cases}\quad(\text{for }x,x'\in\overline X(\mf W)).\]
It follows that
\begin{equation}\label{Fourier}
\rho=\Delta(\overline x_\rho)\sum_{x\in\overline X(\mf W)}\overline{{\{\overline x_\rho,x\}}}R_x\quad\text{for }\rho\in\Uch(\mf G^F).
\end{equation}
In particular, knowing the values of the unipotent characters of $\mf G^F$ is equivalent to knowing the values of the $R_x$ for $x\in\overline X(\mf W)$.
\end{Empty}

\begin{Empty}\label{ParFamUcsh}
On the other hand, by \cite[Theorem 23.1]{LuCS5} combined with the cleanness results in \cite{LuclCS}, the set $\overline X(\mf W)$ in \eqref{ParSet} also serves as a parameter set for ${\hat{\mf G}}^\mathrm{un}$. Namely, there is a bijection
\begin{equation}\label{ParUchSh}
\overline X(\mf W)\xrightarrow{\sim}{\hat{\mf G}}^\mathrm{un},\quad x\mapsto A_x,
\end{equation}
such that for any $x\in\overline X(\mf W)$ and any $\phi\in\Irr(\mf W)$, we have
\begin{equation}\label{MultUcsh}
\bigl(A_x:R_\phi^{\mathscr L_0}\bigr)=\hat\varepsilon_{A_x}\{x,x_\phi\}.
\end{equation}
Here, $\hat\varepsilon_K=(-1)^{\dim\mf G-\dim\supp K}$ for any complex $K\in\mathscr D\mf G$, and $\supp K$ is the Zariski closure of $\{g\in\mf G\mid \mathscr H^i_g K\neq0\text{ for some }i\in\mathbb Z\}\subseteq\mf G$. Once again, this property does not uniquely determine the bijection \eqref{ParUchSh}. For $\mf G$ of type $E_7$, we will fix a choice in \Cref{Parcompatible}.

With these notions we can formulate the following theorem of Shoji, which verifies Lusztig's conjecture under the assumption that $\mf G$ has connected centre. As mentioned in \cite[2.7]{Gvaluni}, this holds without any conditions on $p$, $q$, since the cleanness of cuspidal character sheaves is established in full generality (\cite{LuclCS}).
\begin{Thm}[Shoji {\cite[Theorems 3.2, 4.1]{Sh2}}]\label{Shoji}
Let $p$ be a prime, $q$ a power of $p$, $\mf G$ a connected reductive group over $\overline{\mathbb F}_p$, defined over $\mathbb F_q$ with corresponding Frobenius map $F\colon\mf G\rightarrow\mf G$. Assume that $\mf Z(\mf G)$ is connected and $\mf G/{\mf Z(\mf G)}$ is simple. Then ${\hat{\mf G}}^\mathrm{un}\subseteq{\hat{\mf G}}^F$ and for any $x\in\overline X(\mf W)$, $R_x$ and $\chi_{A_x}$ coincide up to a non-zero scalar.
\end{Thm}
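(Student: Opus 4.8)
The plan is to bring the two parallel parametrisations \eqref{ParUch} and \eqref{ParUchSh} of $\Uch(\mf G^F)$ and ${\hat{\mf G}}^{\mathrm{un}}$ by the common set $\overline X(\mf W)$ onto the same footing by passing to characteristic functions, and to exploit that both are governed by the \emph{same} combinatorics — the pairing $\{\;,\;\}$ and its Fourier matrix $\Upsilon$ — over $\overline X(\mf W)$.

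First I would dispose of the inclusion ${\hat{\mf G}}^{\mathrm{un}}\subseteq{\hat{\mf G}}^F$. Since $F$ acts as the identity on $\mf W$ and fixes the constant local system $\mathscr L_0$ on $\mf T$, we have $F^\ast K_w\cong K_w$ for every $w\in\mf W$, so $F^\ast$ permutes ${\hat{\mf G}}^{\mathrm{un}}$ and induces a permutation of $\overline X(\mf W)$ preserving the property \eqref{MultUcsh} that characterises the bijection \eqref{ParUchSh}; as $\overline X(\mf W)$ and that property depend only on $\mf W$ with its (here trivial) $F$-action, the permutation is the identity and each $A_x$ is $F$-stable. \Cref{LusztigONB} then provides isomorphisms $\varphi_{A_x}$ for which $\{\chi_{A_x,\varphi_{A_x}}\}_{x\in\overline X(\mf W)}$ is an orthonormal family.

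Next comes the heart of the matter. The virtual complex $R_\phi^{\mathscr L_0}$ inherits a Frobenius structure from the $K_w$, and Lusztig's expression of the characteristic functions of the $K_w$ in terms of (generalised) Green functions — legitimate in all characteristics now that cleanness \cite{LuclCS} is available — identifies its characteristic function, for this structure, with $R_\phi$ up to an explicit sign. Expanding $R_\phi^{\mathscr L_0}=\sum_x\hat\varepsilon_{A_x}\{x,x_\phi\}A_x$ by \eqref{MultUcsh} and taking characteristic functions with compatible isomorphisms yields, for all $\phi\in\Irr(\mf W)$,
\[\pm R_\phi=\sum_{x\in\overline X(\mf W)}\sigma_x\,\hat\varepsilon_{A_x}\,\{x,x_\phi\}\,\chi_{A_x,\varphi_{A_x}},\]
where $\sigma_x\in\overline{\mathbb Q}_\ell^\times$ relates the natural normalisation of $A_x$ to the one from \Cref{LusztigONB}; comparing norms via \Cref{LusztigONB}(b) shows that each $\sigma_x$ is a root of unity. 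On the arithmetic side $R_\phi=R_{x_\phi}$, and by \eqref{Fourier} each $\rho\in\Uch(\mf G^F)$ is a known unit combination of the $R_y$. Thus $\{\chi_{A_x,\varphi_{A_x}}\}$ and $\{R_y\}$ are two orthonormal families indexed by the same set $\overline X(\mf W)$, with the same matrix $(\{x,x_\phi\})_{\phi,x}$ linking them to the family $\{R_\phi\}$. This alone does not force $\chi_{A_x,\varphi_{A_x}}\in\overline{\mathbb Q}_\ell^\times\cdot R_x$, because the rectangular matrix $(\{x,x_\phi\})$ fails to be invertible when $\mf W$ has non-singleton families; the missing input is the uniqueness built into Lusztig's algorithm \cite[\S 24]{LuCS5} (the transition matrix is pinned down within each family by the $a$-invariants), together with the induction mechanism of \cite[\S 10]{LuCS2}, \Cref{LusztigParCSh} on the geometric side matched with Lusztig induction $R_{\mf L}^{\mf G}$ on characteristic functions on the arithmetic side. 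Running this reduces the comparison of the labellings \eqref{ParUch} and \eqref{ParUchSh}, hence the proportionality $\chi_{A_x}\in\overline{\mathbb Q}_\ell^\times\cdot R_x$, to the case where $x$ labels a cuspidal unipotent character sheaf $A_0$ on $\mf G$ itself.

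That cuspidal base case is the main obstacle. By \cite[Proposition 3.12]{LuCS1} and the classification of cuspidal pairs in \cite{LuIC}, $A_0=\IC(\overline\Sigma,\mathscr E)[\dim\Sigma]$ for a very restricted pair $(\Sigma,\mathscr E)$, and cleanness \cite{LuclCS} controls the cohomology of this complex on the few strata carrying its characteristic function; one must then check that $\chi_{A_0}$ lies in the span of $\Uch(\mf G^F)$ and, by orthonormality, is proportional to the cuspidal unipotent almost character $R_{x_0}$, $x_0\in\overline X(\mf W)$ being the parameter attached to $A_0$ — and in the connected-centre simple case the cuspidal family of $A_0$ contains essentially one cuspidal unipotent character sheaf and one cuspidal unipotent character, which forces the match. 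One should note that the whole argument yields only the \emph{existence} of a non-zero scalar $\zeta$ with $\chi_{A_x}=\zeta R_x$, not its value; determining $\zeta$ is a strictly harder problem, and it is precisely this problem that the remainder of the paper takes up for $E_7$ in characteristic~$2$.
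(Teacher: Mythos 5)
The paper does not prove this statement at all: it is Shoji's theorem, quoted verbatim from \cite[Theorems 3.2, 4.1]{Sh2}, and the paper's only contribution is the remark (already present in \cite[2.7]{Gvaluni}) that the restriction on $p$, $q$ appearing in Shoji's original papers can be dropped because cleanness of cuspidal character sheaves is now known in full generality by \cite{LuclCS}. So there is nothing internal to compare your sketch against; what you have written is an attempt to reconstruct the overall architecture of \cite{Sh1}, \cite{Sh2} themselves, which span well over a hundred pages. Judged on its own terms, your sketch does capture the broad strategy — pass to characteristic functions, reduce to cuspidal pairs via Harish-Chandra-type induction, handle the cuspidal base case — but it has at least one concrete gap.

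The gap is in the cuspidal base case. You assert that ``in the connected-centre simple case the cuspidal family of $A_0$ contains essentially one cuspidal unipotent character sheaf and one cuspidal unipotent character, which forces the match.'' This is false, and it is false precisely for the group this paper is about: in type $E_7$ the family $\mathscr F_0$ attached to $\phi_{512,11}$, $\phi_{512,12}$ has $\mathscr G_{\mathscr F_0}\cong\mathbb Z/2\mathbb Z$, and of the four elements of $\mathscr M(\mathbb Z/2\mathbb Z)$ the two \emph{not} in the image of $\mathscr F_0\hookrightarrow\mathscr M(\mathscr G_{\mathscr F_0})$ give \emph{two} cuspidal unipotent characters $E_7[\pm\zeta]$ and \emph{two} cuspidal unipotent character sheaves $A_{x_1}$, $A_{x_2}$, all lying in the same family. (Classical groups have even larger cuspidal families.) Orthogonality and the multiplicity data $(A : R_\phi^{\mathscr L_0}) = \hat\varepsilon_A\{x,x_\phi\}$ cannot distinguish the two, since both cuspidal elements of $\overline X(\mf W)$ are orthogonal to every $x_\phi$. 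To pin down the correspondence one needs a further invariant, and this is exactly why the paper invokes the eigenvalue of Frobenius $\lambda_A$ (via Shintani descent, \cite[1.16, Theorem 3.3]{Sh1}) in \Cref{ParHC} and \Cref{Parcompatible} to fix the bijections. Your sketch has no analogue of this step; Shoji's actual argument at this point uses Shintani descent identities rather than the numerology you suggest. A smaller issue: your argument that $F^\ast A_x\cong A_x$ relies on $F^\ast$ preserving the multiplicities \eqref{MultUcsh}, but as the paper itself notes, these multiplicities do not uniquely determine the bijection \eqref{ParUchSh}, so preservation of them does not by itself force $F^\ast$ to fix each $A_x$; the published proofs again use the finer labelling by Frobenius eigenvalues to see this.
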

\end{Empty}
\begin{Rem}
In general, any irreducible character of $\mf G^F$ has a corresponding almost character, and the set of all these almost characters is an orthonormal basis for $\CF(\mf G^F)$ (\cite[\S 4]{Luchars}). Shoji formulates and proves \Cref{Shoji} for the characteristic function of any $F$-stable character sheaf and the respective almost character. Determining the character table of $\mf G^F$ can thus be reformulated to solving the following two problems (see \cite{LucompIrr}):
\begin{enumerate}
\item[(a)] Computing the values of the characteristic functions of $F$-stable character sheaves;
\item[(b)] Determining the scalar relating the characteristic function of any $F$-stable character sheaf with the corresponding almost character.
\end{enumerate}
(b) can be reduced to considering cuspidal ($F$-stable) character sheaves, via the induction process for cuspidal character sheaves mentioned in \Cref{CS}, see \cite[\S 3]{Luvaluni}. On the other hand, Lusztig \cite[\S 24]{LuCS5} shows that (a) can be reduced to the computation of \enquote{generalised Green functions} and he provides an algorithm to compute those functions (refining an earlier algorithm of Shoji in \cite{ShArc}), although it involves some unspecified scalars which are still not known in all cases. We will briefly describe this algorithm in the next section.
\end{Rem}

\begin{Empty}\label{ParHC}
We assume here that $\mf G$ is the simple adjoint group of type $E_7$ over the field $k=\overline{\mathbb F}_p$, defined over $\mathbb F_q$, $q$ a power of $p$, with corresponding Frobenius map $F\colon\mf G\rightarrow\mf G$. With respect to this $\mf G$, $\mf T\subseteq\mf B\subseteq\mf G$ and $\mf W$ are as in the introduction. These choices determine the set of roots $R\subseteq X(\mf T)=\Hom(\mf T,k^\times)$, as well as the positive roots $R^+\subseteq R$ and the simple roots $\Pi:=\{\alpha_1,\ldots,\alpha_7\}\subseteq R^+$. We choose the order of $\alpha_1,\ldots, \alpha_7$ in such a way that the Dynkin diagram of $E_7$ is as follows:
\begin{center}
\begin{tikzpicture}
    \draw (-1.25,-0.25) node[anchor=east]  {$E_7$};

    \node[bnode,label=above:$\alpha_1$] 		(1) at (0,0) 	{};
    \node[bnode,label=right:$\alpha_2$] 		(2) at (2,-1) 	{};
    \node[bnode,label=above:$\alpha_3$] 		(3) at (1,0) 	{};
    \node[bnode,label=above:$\alpha_4$] 		(4) at (2,0) 	{};
    \node[bnode,label=above:$\alpha_5$] 		(5) at (3,0) 	{};
    \node[bnode,label=above:$\alpha_6$] 		(6) at (4,0) 	{};
    \node[bnode,label=above:$\alpha_7$]			(7) at (5,0) 	{};

    \path 	(1) edge[thick, sedge] (3)
          	(3) edge[thick, sedge] (4)
          	(4) edge[thick, sedge] (5)
			(4)	edge[thick, sedge] (2)
          	(5) edge[thick, sedge] (6)
          	(6) edge[thick, sedge] (7);
\end{tikzpicture}
\end{center}
$\mf W$ can be viewed as a Coxeter group of type $E_7$ with Coxeter generators $\mf S=\{s_1,\ldots, s_7\}$, where $s_i$ denotes the reflection in the hyperplane orthogonal to $\alpha_i$. We will describe the parametrisations of $\Uch(\mf G^F)$ and $\hat{\mf G}^{\mathrm{un}}$ in terms of Harish-Chandra series, following Lusztig \cite[\S 3]{LuRCS}. (Note that all the results in loc. cit. hold whenever ${\mf G}/{\mf Z(\mf G)}$ is simple or $\{1\}$, and we will implicitly use this below as far as subgroups of $\mf G$ corresponding to irreducible subgraphs of the Dynkin diagram of type $E_7$ are concerned.)

To any Coxeter system $(W, S)$ is attached (\cite[3.1]{LuRCS}) a certain finite (possibly empty) set $\mathfrak S_{W}^\circ$. If $W$ is irreducible, $\mathfrak S_{W}^\circ$ is contained in the set of roots of unity in $\overline{\mathbb Q}_\ell$. Let
\[\mathfrak S_{W}:=\left\{(J,\epsilon,\zeta)\mid J\subseteq S,\, \epsilon\in\Irr(W^{S/J}),\, \zeta\in\mathfrak S_{W_J}^\circ\right\},\]
where $W_J:=\langle J\rangle\subseteq W$ and $W^{S/J}$ is the subgroup of $W$ generated by the involutions $\sigma_s:=w_0^{J\cup\{s\}}w_0^J=w_0^Jw_0^{J\cup\{s\}}$ for $s\in S\backslash J$ (for $J'\subseteq S$, we denote by $w_0^{J'}$ the longest element in $W_{J'}$). Then both $\left(W_{J},J\right)$ and $\left(W^{S/J},\{\sigma_s\mid s\in S\backslash J\}\right)$ are Coxeter systems, see \cite{LuCoxFrob}, \cite{Lurepchev}.
\begin{equation}\label{sqrtp}
\begin{split}
&\text{From now on we shall fix, once and for all, a square root } \sqrt p \text{ of } p \text{ in }\overline{\mathbb Q}_\ell. \\
&\text{Furthermore, when }q=p^f (f\geqslant1), \text{ we write } \sqrt q:=\sqrt p^f.
\end{split}
\end{equation}
Now consider $\rho\in\Uch(\mf G^F)$. By \cite[3.25, 3.26]{Lurepchev}, there exist $J\subseteq\mf S$ and a cuspidal unipotent character $\rho_0\in\Uch(\mf L_J^F)$ such that $\langle\rho,R_{\mf L_J}^{\mf G}(\rho_0)\rangle_{\mf G^F}\neq0$. (For $J\subseteq\mf S$, we denote by $\mf L_J$ the unique Levi complement of the standard parabolic subgroup $\mf P_J:=\mf B\mf W_J\mf B\subseteq\mf G$ such that $\mf T\subseteq\mf L_J$, and $R_{\mf L_J}^{\mf G}$ is Harish-Chandra induction.) After having chosen $\sqrt p$ in \eqref{sqrtp}, there is a natural isomorphism between the endomorphism algebra $\End_{\overline{\mathbb Q}_\ell\mf G^F}\bigl(R_{\mf L_J}^{\mf G}(\rho_0)\bigr)$ and the group algebra $\overline{\mathbb Q}{_\ell}\bigl[\mf W^{\mf S/J}\bigr]$ (in analogy to \ref{LusztigParCSh}; note that $\mf W^{\mf S/J}$ is isomorphic to $W_\mf G(\mf L_J)$). Hence, among the constituents of $R_{\mf L_J}^{\mf G}(\rho_0)$, $\rho$ is naturally parametrised by some $\epsilon\in\Irr(\mf W^{\mf S/J})$ (see also \cite[Corollary 8.7]{Luchars}). Moreover, denote by $\lambda_\rho$ the eigenvalue of Frobenius of $\rho$ (as defined in \cite[\S 11]{Luchars}). Then we have $\lambda_\rho=\lambda_{\rho_0}\in\mathfrak S_{\mf W_J}^\circ$ and the assignment $\rho\mapsto(J,\epsilon,\lambda_\rho)$ gives a well-defined bijection
\[\Uch(\mf G^F)\xrightarrow{\sim}\mathfrak S_{\mf W}.\]
In particular, the cuspidal unipotent characters correspond to the elements of $\mathfrak S_\mf W^\circ$. On the other hand, let $A\in\hat{\mf G}^{\mathrm{un}}$. By \ref{CSwoFrob}, \ref{LusztigParCSh}, there exist $J\subseteq\mf S$, $A_0\in\hat{\mf L}_J^{\circ,\mathrm{un}}$, such that $A$ is a simple summand of $\ind_{\mf L_J\subseteq\mf P_J}^{\mf G}(A_0)$, and $A$ is parametrised by some $\epsilon\in\Irr(\mf W^{\mf S/J})$. Furthermore, to $A$ is associated a root of unity $\lambda_A$ by means of a certain \enquote{twisting operator} defined via Shintani descent, see \cite[1.16, Theorem 3.3]{Sh1}. Another definition of $\lambda_A$ is given in \cite[3.6]{LuRCS}. Then we have $\lambda_A=\lambda_{A_0}$ (see \cite[3.6]{Sh1}), and this is an element of $\mathfrak S_{\mf W_J}^\circ$. The assignment $(J,\epsilon,\lambda_A)\mapsto A$ gives a well-defined bijection
\[\mathfrak S_{\mf W}\xrightarrow{\sim}\hat{\mf G}^{\mathrm{un}}.\]
In particular, the unipotent cuspidal character sheaves correspond to the elements of $\mathfrak S_\mf W^\circ$.
\end{Empty}

\begin{Rem}\label{Parcompatible}
We keep the setting of \Cref{ParHC}. In particular, $\mf Z(\mf G)$ is connected and $F$ acts trivially on $\mf W$. Recall from \Cref{ParFamUch}, \Cref{ParFamUcsh}, that the bijections $\Uch(\mf G^F)\xrightarrow{\sim}\overline X(\mf W)$ and $\overline X(\mf W)\xrightarrow{\sim}\hat{\mf G}^{\mathrm{un}}$ are not uniquely determined by the conditions \eqref{MultUch}, \eqref{MultUcsh}, respectively. We will now make a definite choice following \cite[11.2]{Luchars}, see also \cite[\S 6]{DMParam}.

Given $\rho\in\Uch(\mf G^F)$, the family $\mathscr F\subseteq\Irr(\mf W)$ for which $\mathscr M(\mathscr G_\mathscr F)$ contains $\overline x_\rho$ is independent of a chosen bijection $\Uch(\mf G^F)\xrightarrow{\sim}\overline X(\mf W)$ in \eqref{ParUch} satisfying \eqref{MultUch}. Consider an element $x=(g,\sigma)\in\mathscr M(\mathscr G_\mathscr F)\subseteq\overline X(\mf W)$. If $\mathscr F$ contains the irreducible characters of $\mf W$ of degree $512$, and if $x$ is not in the image of the embedding $\mathscr F\hookrightarrow\mathscr M(\mathscr G_\mathscr F)$, we set $\tilde\lambda_x:=\zeta\sigma(g)/\sigma(1)$, where $\zeta$ is a primitive fourth root of unity in $\overline{\mathbb Q}_\ell$. In any other case, let $\tilde\lambda_x:=\sigma(g)/\sigma(1)$. Then it turns out that the bijection \eqref{ParUch} can be chosen in such a way that $\lambda_\rho=\tilde\lambda_{\overline x_\rho}$ for any $\rho\in\Uch(\mf G^F)$, and it is almost uniquely determined by this condition along with \eqref{MultUch}, the only ambiguity arises from the two elements of $\Irr(\mf W)$ of degree $512$. This can be removed as follows. We have a natural embedding $\Irr(\mf W)\hookrightarrow\mathfrak S_\mf W$, $\phi\mapsto(\emptyset,\phi,1)$. Then we require that the embeddings $\Irr(\mf W)\hookrightarrow\mathfrak S_\mf W\xrightarrow{\sim}\Uch(\mf G^F)$ and $\Irr(\mf W)\hookrightarrow\overline X(\mf W)\xrightarrow{\sim}\Uch(\mf G^F)$ coincide. This does not interfere with our choices above, in view of \cite[Proposition 12.6]{Luchars}, and it uniquely specifies the bijection $\Uch(\mf G^F)\xrightarrow{\sim}\overline X(\mf W)$.

Now let $A\in\hat{\mf G}^{\mathrm{un}}$ and let $(J,\epsilon,\lambda_A)\in\mathfrak S_\mf W$, $\rho\in\Uch(\mf G^F)$ be the elements associated to $A$ in \Cref{ParHC}. By \cite[3.10]{LuRCS}, we have $\langle\rho,R_\phi\rangle_{\mf G^F}=\bigl(A:R_\phi^{\mathscr L_0}\bigr)$ for any $\phi\in\Irr(\mf W)$. Moreover, $\Delta(\overline x_\rho)=\hat\varepsilon_A$, so \eqref{MultUch} implies $\bigl(A:R_\phi^{\mathscr L_0}\bigr)=\hat\varepsilon_A\{\overline x_\rho,x_\phi\}$ for any $\phi\in\Irr(\mf W)$. Hence, the assignment $\overline x_\rho\mapsto A$ gives rise to a bijection $\overline X(\mf W)\xrightarrow{\sim}\hat{\mf G}^{\mathrm{un}}$ satisfying \eqref{MultUcsh}.
Thus we can and will from now on always assume that the bijections $\Uch(\mf G^F)\xrightarrow{\sim}\overline X(\mf W)\xrightarrow{\sim}\hat{\mf G}^\mathrm{un}$ in \Cref{ParFamUch}, \Cref{ParFamUcsh} are chosen in this way. In particular, the following diagram is commutative.
\begin{center}
\begin{tikzcd}[column sep=12em, row sep=2em]
& \overline X(\mf W)\arrow[dr, "\sim"] &
\\
\Uch(\mf G^F)\arrow[ur, "\sim"]\arrow[dr, "\sim"'] & \Irr(\mf W)\arrow[d, hook]\arrow[u,hook] & \hat{\mf G}^{\mathrm{un}}
\\
& \mathfrak S_{\mf W}\arrow[ur, "\sim"] &
\end{tikzcd}
\end{center}
\end{Rem}

\section{The generalised Springer correspondence}\label{GenSpring}

In this section, $\mf G$ may be assumed to be an arbitrary connected reductive algebraic group over $\overline{\mathbb F}_p$, as in the introduction. We will focus on those character sheaves whose support contains unipotent elements. In \cite[\S 24]{LuCS5}, Lusztig provides an algorithm for computing the values of their characteristic functions in principle, using intersection cohomology complexes \cite{LuIC}. Further references for the following are \cite[1.1-1.3]{ShGreen1}, \cite[\S 3, \S 4]{Tayvaluni}.
\begin{Empty}\label{PreAlg}
Denote by $\mathscr N_{\mf G}$ the set of all pairs $(\mathscr O,\mathscr E)$ where $\mathscr O$ is a unipotent conjugacy class in $\mf G$ and $\mathscr E$ is the isomorphism class of an irreducible $\mf G$-equivariant $\overline{\mathbb Q}_\ell$-local system on $\mathscr O$. For a given $\mathscr O$, these local systems on $\mathscr O$ naturally correspond to irreducible characters of the group $A(u):=C_{\mf G}(u)/C_{\mf G}^\circ(u)$, where $u$ is any fixed element of $\mathscr O$. To $(\mathscr O,\mathscr E)\in\mathscr N_{\mf G}$ is associated (\cite[Theorem 6.5]{LuIC}) a triple $(\mf L,\mathscr O_0,\mathscr E_0)$ consisting of a closed subgroup $\mf L\subseteq\mf G$ which is the Levi complement of some parabolic subgroup of $\mf G$, a unipotent conjugacy class $\mathscr O_0$ in $\mf L$ and an $\mf L$-equivariant $\overline{\mathbb Q}_\ell$-local system $\mathscr E_0$ on $\mathscr O_0$ such that $(\mf Z(\mf L)^\circ\times\mathscr O_0,1\boxtimes\mathscr E_0)$ is a cuspidal pair for $\mf L$ in the sense of \cite[2.4]{LuIC}. Denoting by $\mathscr M_\mf G$ the set of all those triples up to $\mf G$-conjugacy, the corresponding map $\tau\colon\mathscr N_\mf G\rightarrow\mathscr M_\mf G$ is surjective, so we have a decomposition
\[\mathscr N_{\mf G}=\coprod_{(\mf L,\mathscr O_0,\mathscr E_0)\in\mathscr M_{\mf G}}\tau^{-1}(\mf L,\mathscr O_0,\mathscr E_0)\]
of $\mathscr N_{\mf G}$ into non-empty subsets. The elements in a given $\tau^{-1}(\mf L,\mathscr O_0,\mathscr E_0)$ can be distinguished as follows. The triple $(\mf L,\mathscr O_0,\mathscr E_0)$ gives rise to a semisimple perverse sheaf $K_{(\mf L,\mathscr O_0,\mathscr E_0)}\in\mathscr M\mf G$ defined as a certain intersection cohomology complex in \cite[\S 4]{LuIC}, see also \cite[\S 8]{LuCS2}. Then for any $\iota=(\mathscr O_\iota,\mathscr E_\iota)\in\tau^{-1}(\mf L,\mathscr O_0,\mathscr E_0)$ there is (up to isomorphism) exactly one simple direct summand $A_\iota$ of $K_{(\mf L,\mathscr O_0,\mathscr E_0)}$ which satisfies
\begin{equation}\label{CSIC}
A_\iota|_{\mf G_\mathrm{uni}}\cong\IC\bigl(\overline{\mathscr O_\iota},\mathscr E_\iota\bigr)\bigl[\dim\mathscr O_\iota+\dim\mf Z(\mf L)^\circ\bigr]
\end{equation}
($\mf G_{\mathrm{uni}}\subseteq\mf G$ is the unipotent subvariety of $\mf G$), and each simple direct summand of $K_{(\mf L,\mathscr O_0,\mathscr E_0)}$ arises in this way from some $\iota\in\tau^{-1}(\mf L, \mathscr O_0, \mathscr E_0)$, see \cite[6.5]{LuIC}, \cite[24.1]{LuCS5}. On the other hand, the endomorphism algebra $\mathscr A:=\End_{\mathscr M\mf G}\bigl(K_{(\mf L,\mathscr O_0,\mathscr E_0)}\bigr)$ is isomorphic to $\overline{\mathbb Q}_\ell[W_\mf G(\mf L)]$ (\cite[Theorem 9.2]{LuIC}). Thus the isomorphism classes of simple direct summands of $K_{(\mf L,\mathscr O_0,\mathscr E_0)}$ are naturally parametrised by $\Irr(W_\mf G(\mf L))$. Hence, we obtain a bijection
\begin{equation}\label{Springer}
\mathscr N_{\mf G}\cong\coprod_{(\mf L,\mathscr O_0,\mathscr E_0)\in\mathscr M_{\mf G}}\Irr(W_\mf G(\mf L)),
\end{equation}
which is called the generalised Springer correspondence. The problem of explicitly determining this correspondence can be reduced to the case where $\mf G$ is almost simple of simply-connected type, and has been solved by Lusztig and Spaltenstein, except for a few minor cases in type $E_6$ with $p\neq3$, and in type $E_8$ with $p=3$ (\cite{LuIC}, \cite{LuSp}, \cite{Sp}, \cite{LugenSpringer}). In particular, the generalised Springer correspondence is completely determined in the case of $E_7$, see \cite{Sp}.
\end{Empty}

\begin{Empty}\label{PreAlgF}
Let us now bring the $\mathbb F_q$-rational structure on $\mf G$ into the picture, see \cite[\S 24]{LuCS5}. So we assume that $\mf G$ is defined over $\mathbb F_q$, with Frobenius map $F\colon\mf G\rightarrow\mf G$. $F$ acts naturally on both $\mathscr N_\mf G$ and $\mathscr M_\mf G$ via
\[\mathscr N_\mf G\rightarrow\mathscr N_\mf G,\;(\mathscr O,\mathscr E)\mapsto(F^{-1}(\mathscr O),F^\ast\mathscr E),\]
and
\[\mathscr M_\mf G\rightarrow\mathscr M_\mf G,\;(\mf L,\mathscr O_0,\mathscr E_0)\mapsto(F^{-1}(\mf L),F^{-1}(\mathscr O_0),F^\ast\mathscr E_0),\]
respectively. If $\mf L$ is $F$-stable, it is also clear that $F$ acts on $W_\mf G(\mf L)$ and, hence, on $\Irr(W_\mf G(\mf L))$, by
\[\Irr(W_\mf G(\mf L))\rightarrow\Irr(W_\mf G(\mf L)),\;\epsilon\mapsto\epsilon\circ F.\]
Let $\mathscr N_\mf G^F\subseteq\mathscr N_\mf G$, $\mathscr M_\mf G^F\subseteq\mathscr M_\mf G$, ${\Irr(W_\mf G(\mf L))}^F\subseteq\Irr(W_\mf G(\mf L))$ be the respective subsets of fixed points under these actions (where, in terms of the local systems, this is only meant up to isomorphism). The generalised Springer correspondence \eqref{Springer} is compatible with the above actions, thus it induces a bijection
\begin{equation}\label{SpringerF}
\mathscr N_{\mf G}^F\cong\coprod_{(\mf L,\mathscr O_0,\mathscr E_0)\in\mathscr M_{\mf G}^F}{\Irr(W_\mf G(\mf L))}^F.
\end{equation}
For any $\iota=(\mathscr O,\mathscr E)\in\mathscr N_{\mf G}^F$, the complex $A_\iota$ is $F$-stable. More precisely, let $\tau(\iota)=(\mf L, \mathscr O_0, \mathscr E_0)\in\mathscr M_{\mf G}^F$. We may choose an isomorphism $F^\ast\mathscr E_0\xrightarrow{\sim}\mathscr E_0$ which induces a map of finite order at the stalk of $\mathscr E_0$ at any element of $\mathscr O_0^F$. Such a choice induces an isomorphism $F^\ast K_{(\mf L, \mathscr O_0, \mathscr E_0)}\xrightarrow{\sim}K_{(\mf L, \mathscr O_0, \mathscr E_0)}$, which in turn gives rise to an isomorphism $\varphi_{A_\iota}\colon F^\ast A_\iota\xrightarrow{\sim}A_\iota$, as described in \cite[24.2]{LuCS5}. Once $\varphi_{A_\iota}$ is fixed, it determines an isomorphism $\psi_\iota\colon F^\ast\mathscr E\xrightarrow{\sim}\mathscr E$ (via \cite[(24.2.2)]{LuCS5}), and for any $g\in\mathscr O^F$, the induced map $\psi_{\iota,g}\colon\mathscr E_g\rightarrow\mathscr E_g$ on the stalk of $\mathscr E$ at $g$ is of finite order. Now define two functions $Y_\iota, X_\iota\colon\mf G^F_{\mathrm{uni}}\rightarrow\overline{\mathbb Q}_\ell$ by
\[Y_\iota(g):=\begin{cases}
\Trace(\psi_{\iota,g},\mathscr E_g)\quad&\text{if }g\in \mathscr O^F, \\
 \hfil0\quad&\text{if }g\notin \mathscr O^F
\end{cases}\]
and
\[X_\iota(g):=(-1)^{\dim \mathscr O+\dim{\mf Z(\mf L)}^\circ}q^{-d_\iota}\chi_{A_\iota,\varphi_{A_\iota}}(g)\]
for $g\in\mf G^F_{\mathrm{uni}}$, where $d_\iota=\frac12(\dim\supp A_\iota-\dim \mathscr O-\dim{\mf Z(\mf L)}^\circ)$. Then both $Y_\iota$ and $X_\iota$ are invariant under the conjugation action of $\mf G^F$ on $\mf G^F_{\mathrm{uni}}$.
\end{Empty}
\begin{Thm}[Lusztig {\cite[\S 24]{LuCS5}}]\label{Alg}
In the setting of \Cref{PreAlgF}, the following hold.
\begin{enumerate}
\item[(a)] The functions $Y_\iota$, $\iota\in\mathscr N_{\mf G}^F$, form a basis for the vector space consisting of all functions $\mf G^F_{\mathrm{uni}}\rightarrow\overline{\mathbb Q}_\ell$ which are invariant under $\mf G^F$-conjugacy.
\item[(b)] There is a system of equations
\[X_\iota=\sum_{\iota'\in\mathscr N_{\mf G}^F}p_{\iota',\iota}Y_{\iota'}\quad(\iota\in\mathscr N_{\mf G}^F),\]
for some uniquely determined $p_{\iota',\iota}\in\mathbb Z$.
\end{enumerate}
\end{Thm}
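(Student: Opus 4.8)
The plan is to prove the two assertions in turn: part~(a) is a counting-plus-linear-independence statement handled one unipotent class at a time, and part~(b) is then essentially formal, the only substantial point being the integrality of the coefficients.

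For part~(a): for $\iota=(\mathscr O,\mathscr E)\in\mathscr N_\mf G^F$ the function $Y_\iota$ is, by definition, supported on the single $F$-stable unipotent class $\mathscr O$, and the $F$-stable unipotent classes partition $\mf G^F_{\mathrm{uni}}$ into $\mf G^F$-stable pieces; so it suffices to show, for each $F$-stable unipotent class $\mathscr O$, that the functions $Y_\iota$ attached to pairs $\iota=(\mathscr O,\mathscr E)$ (with $\mathscr E$ $F$-stable) restrict to a basis of the space of $\mf G^F$-invariant functions $\mathscr O^F\to\overline{\mathbb Q}_\ell$. Fixing $u\in\mathscr O^F$, Lang's theorem identifies the $\mf G^F$-orbits on $\mathscr O^F$ with $H^1(F,A(u))$, and the number of these equals the number of $F$-stable irreducible characters of the component group $A(u)$, which is precisely the number of $F$-stable local systems $\mathscr E$ on $\mathscr O$; hence the index set of the $Y_\iota$ over $\mathscr O$ and a transversal $u_1,\dots,u_r$ of the $\mf G^F$-orbits on $\mathscr O^F$ have the same size. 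It remains to prove that the square matrix $\bigl(Y_\iota(u_j)\bigr)$ is non-singular. By the definition of $Y_\iota$ via the stalk traces $\Trace(\psi_{\iota,g},\mathscr E_g)$, this matrix is --- up to the root-of-unity normalisation of the $\psi_\iota$ --- the table of the $F$-stable irreducible characters of $A(u)$, suitably extended, evaluated on a transversal of its $F$-conjugacy classes, and its invertibility is the twisted orthogonality relation for $A(u)$. This gives~(a).

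For part~(b): since $A_\iota$ is an $F$-stable $\mf G$-equivariant perverse sheaf, $\chi_{A_\iota,\varphi_{A_\iota}}$ is a class function on $\mf G^F$, hence $X_\iota$ is $\mf G^F$-invariant on $\mf G^F_{\mathrm{uni}}$, and by~(a) it admits a unique expansion $X_\iota=\sum_{\iota'\in\mathscr N_\mf G^F}p_{\iota',\iota}Y_{\iota'}$ with $p_{\iota',\iota}\in\overline{\mathbb Q}_\ell$; this also yields the uniqueness. The content of the statement is that these coefficients are integers. To establish this I would use \eqref{CSIC}: on the unipotent variety $A_\iota$ is $\IC(\overline{\mathscr O_\iota},\mathscr E_\iota)$ up to a shift, so evaluating $\chi_{A_\iota,\varphi_{A_\iota}}$ at $g\in\mathscr O_{\iota'}^F$ (which contributes only when $\mathscr O_{\iota'}\subseteq\overline{\mathscr O_\iota}$, so that the system is triangular for the closure order) produces an alternating sum of traces of Frobenius on the stalk cohomology $\mathscr H^i_g\bigl(\IC(\overline{\mathscr O_\iota},\mathscr E_\iota)\bigr)$. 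The decisive input is that, for intersection cohomology complexes on unipotent class closures, these stalks vanish in odd degrees and are pure, with Frobenius acting on $\mathscr H^{2i}_g$ with eigenvalues of the form $q^i\zeta$, $\zeta$ a root of unity; this is Lusztig's result, resting in the present bad-characteristic situation on the cleanness of cuspidal pairs \cite{LuclCS}. Combined with the normalising factor $(-1)^{\dim\mathscr O_\iota+\dim\mf Z(\mf L)^\circ}q^{-d_\iota}$ built into the definition of $X_\iota$ --- which is arranged exactly so that the surviving powers of $q$ organise into a polynomial in $q$ with integer coefficients --- one finds that each $p_{\iota',\iota}$ is the value at our fixed $q$ of such a polynomial, in particular an integer.

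I expect the main obstacle, and the genuinely deep ingredient, to be this control of the Frobenius action on the stalks of $\IC(\overline{\mathscr O_\iota},\mathscr E_\iota)$ --- the purity and parity-vanishing of intersection cohomology on unipotent class closures, together with the cleanness results needed in small characteristic --- as well as the precise bookkeeping of the shift by $\dim\mathscr O_\iota+\dim\mf Z(\mf L)^\circ$, the twist $q^{-d_\iota}$ and the signs, which conspire to cancel all fractional powers of $q$ and leave integers. The reductions and the counting in part~(a), granted the twisted orthogonality relation for the component groups, are by comparison routine.
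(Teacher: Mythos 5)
The paper does not prove this theorem; it is quoted verbatim from Lusztig \cite[\S 24]{LuCS5}, with the only local observation being that the characteristic restriction (23.0.1) of loc.\ cit.\ can be dropped in view of the cleanness results of \cite{LuclCS}. So there is no in-paper proof to compare against, and your proposal has to be judged as a reconstruction of Lusztig's argument.

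Your part~(a) is sound and is essentially how Lusztig proceeds: each $Y_\iota$ is supported on a single $F$-stable class, the number of $F$-stable local systems on $\mathscr O$ equals $|H^1(F,A(u))|$ (both equal the number of $F$-twisted conjugacy classes of $A(u)$, by Lang--Steinberg on one side and Brauer's permutation lemma on the other), and the block of the matrix $\bigl(Y_\iota(u_j)\bigr)$ over a fixed $\mathscr O$ is, up to normalisation, an $F$-twisted character table whose non-singularity is the twisted orthogonality relations. You should be a little careful that the normalising finite-order maps $\psi_{\iota}$ can be, and are, chosen so that the resulting matrix is literally the twisted character table and not merely a root-of-unity rescaling of it, but this is a manageable point and you have flagged it.

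Your part~(b) correctly isolates purity, parity vanishing, and cleanness as the deep inputs, and correctly identifies the triangularity with respect to the closure order. But the route you sketch from ``eigenvalues of Frobenius on stalks of $\IC(\overline{\mathscr O_\iota},\mathscr E_\iota)$ are $q^i\zeta$'' to ``the coefficients $p_{\iota',\iota}$ are integers'' is not the one Lusztig takes, and as written it has a genuine gap: to extract $p_{\iota',\iota}$ from $X_\iota=\sum p_{\iota',\iota}Y_{\iota'}$ you must invert the matrix of values $Y_{\iota'}(g)$ on each class, whose entries are sums of roots of unity (not indicator functions), and there is no obvious reason this inversion should produce integers rather than merely cyclotomic integers. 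Lusztig sidesteps this by characterising the matrix $P=(p_{\iota',\iota})$ via orthogonality: there is a known ``multiplicity'' matrix $\Omega$ and a diagonal matrix $\Lambda$, both with entries given by integer polynomials in $q$, such that $P^{\mathrm t}\Lambda P=\Omega$, and this together with the upper unitriangularity of $P$ and a normalisation of the diagonal of $\Lambda$ determines $P$ (and $\Lambda$) uniquely by a recursive algorithm that visibly produces integers; this is precisely what \texttt{ICCTable} computes. Purity and parity vanishing enter in establishing these orthogonality relations and the triangularity, not in a direct eigenvalue analysis. So your sketch is compatible in spirit but would need a substantial additional argument to close the integrality step if you insist on avoiding the orthogonality characterisation.
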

Note that the restrictions (23.0.1) on the characteristic of $k$ can be removed since the cleanness of cuspidal character sheaves has been established in complete generality \cite{LuclCS}.
\begin{Rem}\label{RemarkAlg}
In the setting of \Cref{Alg}, if we define a total order $\leqslant$ on $\mathscr N_\mf G^F$ in such a way that for $\iota=(\mathscr O,\mathscr E)$, $\iota'=(\mathscr O',\mathscr E')\in\mathscr N_\mf G^F$ we have $\iota'\leqslant\iota$ whenever $\mathscr O'\subseteq\overline{\mathscr O}$ (the latter condition defines a partial order on the set of unipotent classes of $\mf G$), the matrix $(p_{\iota',\iota})_{\iota',\iota\in\mathscr N_\mf G^F}$ has upper unitriangular shape, as follows immediately from the definitions. Lusztig \cite[24.4]{LuCS5} provides an algorithm for computing this matrix $(p_{\iota',\iota})$, which entirely relies on combinatorial data. This algorithm is implemented in {\sffamily {CHEVIE}} \cite{MiChv} and is accessible via the functions \texttt{UnipotentClasses} and \texttt{ICCTable}. Hence, the computation of the characteristic functions $\chi_{A_\iota,\varphi_{A_\iota}}$ ($\iota\in\mathscr N_{\mf G}^F$) at unipotent elements is reduced to the computation of the functions $Y_\iota$, $\iota\in\mathscr N_\mf G^F$. It should be noted, however, that the latter is still a non-trivial task since it is difficult to describe the isomorphisms $\varphi_{A_\iota}$ (and thus the $\psi_\iota$) explicitly. This has been accomplished for classical groups (in any characteristic) by Shoji, \cite{ShGreen1}, \cite{ShGreen2}, \cite{ShGreen3}. As far as exceptional groups are concerned, this problem is not yet solved.
\end{Rem}

\begin{Rem}\label{ICindA0}
In the setting of \Cref{PreAlg}, let us fix a triple $(\mf L,\mathscr O_0,\mathscr E_0)\in\mathscr M_\mf G$ and consider the complex $K_{(\mf L,\mathscr O_0,\mathscr E_0)}\in\mathscr M\mf G$. By \cite[4.5]{LuIC}, we have a canonical isomorphism
\[K_{(\mf L,\mathscr O_0,\mathscr E_0)}\cong\ind_{\mf L\subseteq\mf P}^{\mf G}\bigl(\IC(\overline\Sigma_0,\mathscr E_0)[\dim\Sigma_0]\bigr),\text{ where }\Sigma_0=\mf Z(\mf L)^\circ\mathscr O_0.\]
Here, $A_0:=\IC(\overline\Sigma_0,\mathscr E_0)[\dim\Sigma_0]$ is a cuspidal character sheaf on $\mf L$. In particular, since the definition of $K_{(\mf L,\mathscr O_0,\mathscr E_0)}$ does not involve the choice of a parabolic subgroup with Levi complement $\mf L$, this shows that $\ind_{\mf L\subseteq\mf P}^{\mf G}(A_0)$ is independent of $\mf P$, and we can just write $\ind_{\mf L}^{\mf G}(A_0)$ instead. Next, as the elements of $\mathscr M_\mf G$ are taken up to $\mf G$-conjugacy, we may assume that $\mf L$ is a standard Levi subgroup of some standard parabolic subgroup $\mf P$ of $\mf G$ (compare \Cref{ParHC}), that is, there exists some $J\subseteq\mf S$ such that $\mf P=\mf P_J=\mf B\mf W_J\mf B$, and $\mf L=\mf L_J$ is the unique Levi complement of $\mf P_J$ containing $\mf T$.
\end{Rem}

\section{The scalars \texorpdfstring{$\xi_x$}{xix} for cuspidal \texorpdfstring{$A_x$}{Ax} in \texorpdfstring{$E_7$}{E7}, \texorpdfstring{$p=2$}{p=2}}
The notation and assumptions are as in \Cref{ParHC}, and in addition we set $p:=2$. In particular, $\mf G$ has trivial centre and we can apply \Cref{Shoji}. So there are scalars $\xi_x\in\overline{\mathbb Q}_\ell$ such that
\begin{equation}\label{scalars}
R_x=\xi_x\chi_{A_x}\quad\text{for }x\in\overline X(\mf W).
\end{equation}
Since $\langle R_x,R_x\rangle=1=\langle\chi_A,\chi_A\rangle$ for any $x\in\overline X(\mf W)$, $A\in{\hat{\mf G}}^\mathrm{un}$, we know that $\xi_x\overline\xi_x=1$ for any $x\in\overline X(\mf W)$. In this section, we determine the scalars $\xi_x$ for the cuspidal (unipotent) character sheaves $A_x$, after having chosen specific isomorphisms $F^\ast A_x\xrightarrow{\sim}A_x$ for the corresponding $x\in\overline X(\mf W)$.
\begin{Empty}\label{valcuspchar}
By \cite[Proposition 20.3]{LuCS4} there are two cuspidal character sheaves $A_1, A_2$ for $\mf G$, and both of them lie in ${\hat{\mf G}}^\mathrm{un}$. Their support is the unipotent variety $\mf G_{\mathrm{uni}}\subseteq\mf G$ consisting of all unipotent elements in $\mf G$. This variety is the (Zariski) closure of the regular unipotent conjugacy class $\mathscr O_{\mathrm{reg}}$, which is the unique class of all unipotent $x\in\mf G$ with the property $\dim C_{\mf G}(x)=\rank\mf G=7$. In particular, $\mathscr O_{\mathrm{reg}}$ is $F$-stable. Denote by $u_i=u_{\alpha_i}$ ($1\leqslant i\leqslant 7$) the closed embedding $\mf k^+\rightarrow\mf G$ whose image is the root subgroup $\mf U_{\alpha_i}\subseteq\mf U$. We set
\begin{equation}\label{u0}
u_{0}:=u_1(1)\cdot u_2(1)\cdot u_3(1)\cdot u_4(1)\cdot u_5(1)\cdot u_6(1)\cdot u_7(1)\in\mathscr O_\mathrm{reg}^F.
\end{equation}
Let $a_0$ be the image of $u_0$ in $A(u_{0})=C_{\mf G}(u_{0})/C_{\mf G}^\circ(u_{0})$ under the natural map $C_{\mf G}(u_{0})\rightarrow A(u_{0})$. Then $A(u_{0})$ is a cyclic group of order $4$ generated by $a_0$ (see \cite[\S 4]{PW} and \cite[14.15, 14.18]{DM}), and the automorphism of $A(u_{0})$ induced by $F$ is the identity. Hence, the elements of $A(u_0)$ correspond to the $\mf G^F$-conjugacy classes contained in $\mathscr O_\mathrm{reg}^F$ (see, for instance, \cite[4.3.6]{Gintro}). In particular, there are $4$ such classes and every irreducible $\mf G$-equivariant $\overline{\mathbb Q}_\ell$-local system on $\mathscr O_\mathrm{reg}$ is one-dimensional. For $a\in A(u_0)$, we choose some $x\in\mf G$ such that $x^{-1}F(x)\in C_\mf G(u_0)$ has image $a\in A(u_0)$. Then $u_a:=xu_0x^{-1}\in\mathscr O_\mathrm{reg}^F$ (defined only up to $\mf G^F$-conjugacy) represents the $\mf G^F$-conjugacy class corresponding to $a$. If $a$ is the trivial element of $A(u_0)$, we will always assume that the representative for the corresponding $\mf G^F$-conjugacy class is $u_0$. Thus $\{u_{a_0^i}\mid0\leqslant i\leqslant3\}$ is a set of representatives for the $\mf G^F$-conjugacy classes inside $\mathscr O_\mathrm{reg}^F$. Singling out the element $u_0\in\mathscr O_{\mathrm{reg}}^F$ as above (or, more precisely, the $\mf G^F$-conjugacy class of $u_0$) gives rise to specific isomorphisms $\varphi_{A_i}\colon F^\ast A_i\xrightarrow{\sim}A_i$ with respect to the two cuspidal character sheaves $A_i\in\hat{\mf G}^{\mathrm{un}}$ ($i=1,2$), as follows. Let $\mathscr E_i$ be the $F$-stable irreducible $\mf G$-equivariant $\overline{\mathbb Q}_\ell$-local system on $\mathscr O_{\mathrm{reg}}$ such that $A_i\cong\IC(\mf G_{\mathrm{uni}},\mathscr E_i)[\dim\mathscr O_\mathrm{reg}]$. By \cite[24.1]{LuCS5}, we have
\[\mathscr H^s(A_i)|_{\mf G_\mathrm{uni}}\cong\begin{cases}\mathscr E_i\;&\text{if }s=-d\\\hfil0\;&\text{if }s\neq-d,\end{cases}\]
where $d=\dim\mathscr O_\mathrm{reg}$. Hence, for $u\in\mathscr O_\mathrm{reg}^F$, the stalk $\mathscr H^{-d}_u(A_i)\cong\mathscr E_{i,u}$ is one-dimensional. Using \cite[25.1]{LuCS5}, the $\varphi_{A_i}$ may thus be chosen in such a way that, in particular, for any $u\in\mathscr O_{\mathrm{reg}}^F$, the induced map $\mathscr E_{i,u}\rightarrow\mathscr E_{i,u}$ is given by scalar multiplication with $\sqrt q^7$ times a root of unity. We can therefore modify the $\varphi_{A_i}$ by a root of unity, if necessary, such that the induced map on the stalk $\mathscr E_{i,u_0}$ is just scalar multiplication with $\sqrt q^7$, and this uniquely specifies our choices for $\varphi_{A_i}\colon F^\ast A_i\xrightarrow{\sim}A_i$ ($i=1,2$). Denoting by $\varphi_{\mathscr E_i}\colon F^\ast\mathscr E_i\xrightarrow{\sim}\mathscr E_i$ the restriction of $\varphi_{A_i}$, we then simply have
\[\chi_{A_i,\varphi_{A_i}}(u)=\chi_{\mathscr E_i,\varphi_{\mathscr E_i}}(u)\quad\text{for }u\in\mathscr O_\mathrm{reg}^F,\;i=1,2.\]   
The latter function can be computed explicitly, as described in \cite[19.7]{LuCSDC4}. Namely, assigning to a local system $\mathscr E$ on $\mathscr O_\mathrm{reg}$ its stalk $\mathscr E_{u_0}$ at $u_0$ gives a bijection between the isomorphism classes of $F$-stable $\mf G$-equivariant irreducible local systems $\mathscr E$ on $\mathscr O_\mathrm{reg}$ and $F$-invariant simple $\overline{\mathbb Q}_\ell[A(u_0)]$-modules. For $i=1,2$, let $\sigma_i\in\Irr(A(u_0))^F$ be the character corresponding to the local system $\mathscr E_i$ on $\mathscr O_\mathrm{reg}$. The $\sigma_i$ are the two faithful irreducible characters of $A(u_0)$. We number the cuspidal character sheaves $A_i$ (and, correspondingly, the $\mathscr E_i$, $\sigma_i$) in such a way that $\sigma_1(a_0)=\zeta$, $\sigma_2(a_0)=-\zeta$, where $\zeta$ is the primitive fourth root of unity in $\overline{\mathbb Q}_\ell$ chosen in \Cref{Parcompatible}. Using the cleanness results in \cite{LuclCS}, $\chi_{A_i,\varphi_{A_i}}$ vanishes outside $\mathscr O_\mathrm{reg}^F$. Hence, for $g\in\mf G^F$, we obtain
\begin{center}
\begin{tabular}{|c|c|c|c|c|c|}
\hline
& $g\notin\mathscr O_\mathrm{reg}^F$ & $g=u_{0}$ & $g=u_{a_0}$ & $g=u_{a_0^2}$ & $g=u_{a_0^3}$ \\
\hline
$\chi_{A_1,\varphi_{A_1}}(g)$ & $0$ & $q^{7/2}$ & $\phantom{-}q^{7/2}\zeta$ & $-q^{7/2}$ & $-q^{7/2}\zeta$ \\
\hline
$\chi_{A_2,\varphi_{A_2}}(g)$ & $0$ & $q^{7/2}$ & $-q^{7/2}\zeta$ & $-q^{7/2}$ & $\phantom{-}q^{7/2}\zeta$ \\
\hline
\end{tabular}
\end{center}
where $q^{7/2}=\sqrt q^7$. We can now formulate the result.
\end{Empty}
\begin{Prop}\label{scalar1}
In the setting of \Cref{valcuspchar}, let $x_1,x_2$ be the elements of $\overline X(\mf W)$ such that $A_i=A_{x_i}$, $i=1,2$. ($A_{x_i}$ is given by the parametrisation \eqref{ParUchSh}, which is uniquely determined by the requirements in \Cref{Parcompatible}.) Then we have
\[R_{x_i}=\chi_{A_i,\varphi_{A_i}}\quad\text{for }i=1,2.\]
In other words, with the choices for $\varphi_{A_i}\colon F^\ast A_i\xrightarrow{\sim}A_i$ made in \Cref{valcuspchar} ($i=1,2$), the scalars $\xi_{x_1}$, $\xi_{x_2}$ in \eqref{scalars} are both $1$.
\end{Prop}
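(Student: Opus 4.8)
By \Cref{Shoji} we already have $R_{x_i}=\xi_{x_i}\chi_{A_i,\varphi_{A_i}}$ with $\xi_{x_i}\overline\xi_{x_i}=1$, so the assertion is equivalent to $\xi_{x_1}=\xi_{x_2}=1$. The cleanness results of \cite{LuclCS} (already used in \ref{valcuspchar}) make $\chi_{A_i,\varphi_{A_i}}$ vanish off $\mathscr O_{\mathrm{reg}}^F$, and the table in \ref{valcuspchar} records $\chi_{A_i,\varphi_{A_i}}(u_0)=q^{7/2}\neq0$; hence $\xi_{x_i}=q^{-7/2}R_{x_i}(u_0)$, and it suffices to prove $R_{x_i}(u_0)=q^{7/2}$. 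Thus everything will be extracted from the value of $R_{x_i}$ at the single regular unipotent element $u_0$.

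The next step is to describe the family $\mathscr F\subseteq\Irr(\mf W)$ for which $\mathscr M(\mathscr G_\mathscr F)$ contains $x_1$ and $x_2$: it is the family containing the two irreducible characters of $\mf W$ of degree $512$ (cf.\ \Cref{Parcompatible}), and its group $\mathscr G_\mathscr F$, the embedding $\mathscr F\hookrightarrow\mathscr M(\mathscr G_\mathscr F)$, the Fourier matrix (with the modification by $\zeta$), the signs $\Delta$, and the Harish-Chandra series of its members are all explicitly available (e.g.\ from {\sffamily CHEVIE}). Apart from the two cuspidal unipotent characters $E_7[\zeta],E_7[-\zeta]$ — which by the commutative diagram of \Cref{Parcompatible} are exactly the unipotent characters labelled by $x_1,x_2$ — the members of $\mathscr F$ consist of certain principal series unipotent characters (among them the two of degree $512$) together with a few members lying in Harish-Chandra series attached to cuspidal unipotent characters of proper standard Levi subgroups of type $D_4$ or $E_6$.

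I would then determine $\rho(u_0)$ for every member $\rho\in\mathscr F$ other than $E_7[\zeta],E_7[-\zeta]$, and $R_x(u_0)$ for every $x\in\mathscr F$ other than $x_1,x_2$. For a principal series member $\rho_\phi$ this is where the Iwahori--Hecke algebra $\mathcal H$ of $\mf G^F$ enters: by the formula of \cite{GCH}, $\rho_\phi(u_0)$ is expressed through values of the irreducible character $\chi_\phi$ of $\mathcal H$, which are known in type $E_7$. For a member lying in a $D_4$- or $E_6$-series, $\rho(u_0)$ follows from the behaviour of Harish-Chandra induction at a regular unipotent element (which projects to a regular unipotent element of the Levi), reducing to the known value of the relevant cuspidal unipotent character of the smaller group. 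For the almost characters one has $R_{x_\phi}(u_0)=|\mf W|^{-1}\sum_{w}\phi(w)R_w(u_0)$, the regular-class Green function values $R_w(u_0)$ being those determined in \cite{GcompGreen}, while for a $D_4$- or $E_6$-label $x$ the restriction $\chi_{A_x}|_{\mf G^F_{\mathrm{uni}}}=\xi_x^{-1}R_x|_{\mf G^F_{\mathrm{uni}}}$ is computed by the generalised Green function algorithm (\Cref{Alg}, \Cref{RemarkAlg}) — carried out for $E_7$, $p=2$ in \cite{GcompGreen} — and the scalar $\xi_x$ is known, since $A_x$ is induced from a cuspidal character sheaf on a proper Levi whose scalar was settled in earlier work and the reduction of \cite[\S 3]{Luvaluni} applies. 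Feeding these data into the Fourier inversion \eqref{Fourier} for each member $\rho\in\mathscr F$ with $\rho(u_0)$ now known yields a linear equation in the two unknowns $R_{x_1}(u_0),R_{x_2}(u_0)$; since $\mathscr F$ provides more than two such equations and the relevant $2\times2$ minors of the Fourier matrix are non-singular, the overdetermined system has a unique solution, which I expect to be $R_{x_1}(u_0)=R_{x_2}(u_0)=q^{7/2}$, giving $\xi_{x_1}=\xi_{x_2}=1$.

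Throughout one must keep careful track of the normalisations: the choices of $\sqrt q$ and $\zeta$ in \eqref{sqrtp} and \Cref{Parcompatible}, the identification of $A_i$ with $x_i$ via the characters $\sigma_i$ (with $\sigma_i(a_0)=\pm\zeta$), and the signs $\Delta$. The main obstacle — and the source of the \enquote{more theoretical machinery} needed compared with the $E_6$, $p=3$ case of \cite{HE6p3} — is precisely that $\mathscr F$ is not contained in the principal series: besides the Hecke-algebra formula of \cite{GCH} one needs the generalised Green function data of \cite{GcompGreen} and the earlier results on the cuspidal character sheaves of the $D_4$- and $E_6$-type Levi subgroups, and one must organise the Fourier inversion over $\mathscr F$ so that the a priori unknown values of $E_7[\zeta],E_7[-\zeta]$ — and of the two degree-$512$ principal series characters — are never used as input but instead drop out as a by-product.
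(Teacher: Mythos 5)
Your high-level reading of the problem is right (reduce to computing $R_{x_i}(u_0)$, use cleanness and the table in \ref{valcuspchar}, bring in the Hecke algebra formula of \cite{GCH} and the generalised Springer correspondence), but several concrete steps in the proposed route do not work. First, the family $\mathscr F$ containing the two degree-$512$ characters and $x_1,x_2$ has $\mathscr G_\mathscr F=\mathbb Z/2\mathbb Z$, so $\mathscr M(\mathscr G_\mathscr F)$ has exactly four elements: $x_{\phi_{512,11}},x_{\phi_{512,12}},x_1,x_2$. There are no $D_4$- or $E_6$-series members of this family; those arise in \emph{other} families (e.g.\ the one containing $\phi_{56,3},\phi_{35,4},\phi_{21,6}$), and they enter the paper's argument only because \eqref{HeckeUch} sums over \emph{all} of $\Irr(\mf W)$, not just over $\mathscr F$. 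Second, your claim that Fourier inversion over $\mathscr F$ yields an overdetermined system with non-singular $2\times2$ minors is incorrect: with $R_{x_{\phi_{512,11}}}(u_0)=R_{x_{\phi_{512,12}}}(u_0)=0$ (vanishing as in \Cref{vanishingAx}), the two equations
\[
\rho_{\phi_{512,11}}(u_0)=\tfrac12\bigl(R_{x_1}(u_0)+R_{x_2}(u_0)\bigr),\qquad
\rho_{\phi_{512,12}}(u_0)=-\tfrac12\bigl(R_{x_1}(u_0)+R_{x_2}(u_0)\bigr)
\]
are proportional, so they determine only the sum $R_{x_1}(u_0)+R_{x_2}(u_0)$. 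To split this sum one needs the extra symmetry that the paper establishes first: $u_0$ is $\mf G^F$-conjugate to $u_0^{-1}$ (\Cref{conjugate}), combined with $\overline{\rho_{x_1}}=\rho_{x_2}$ and the reality of the relevant Fourier coefficients, which forces $R_{x_1}(u_0)=R_{x_2}(u_0)\in\mathbb R$ and hence $\xi_{x_1}=\xi_{x_2}\in\{\pm1\}$. Your proposal omits this step entirely, and without it the problem is genuinely underdetermined.

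Third, \eqref{HeckeUch} does \emph{not} give individual values $\rho_\phi(u_0)$; it is a single relation, for each $w$, summed over all $\phi\in\Irr(\mf W)$. The paper makes it usable by choosing $w=w_\mathrm c$ a Coxeter element and by using the generalised Springer correspondence plus \Cref{vanishingAx} to show that only six terms survive; the resulting relation contains one further unknown $R_{x_0}(u_0)$ for the $D_4$-induced sheaf, which \Cref{Rx0u0} pins down to $\delta q^2$ with $\delta\in\{\pm1\}$. The final step is then a positivity argument: $O_{u_0}\cap\mf B^F\dot w_\mathrm c\mf B^F\neq\emptyset$ makes the left side of \eqref{xidelta} strictly positive, which forces $\xi=+1$. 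Your proposal instead ends with ``which I expect to be $q^{7/2}$,'' i.e.\ no argument is given, and it also leans on external computations (\cite{GcompGreen}, scalars for Levi subgroups) that the paper is careful not to rely on in this proof. In short: the conjugation lemma, the choice of $w_\mathrm c$, and the positivity argument are the three ideas that make this work, and all three are missing from the proposal.
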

The proof will be given in \ref{scalarpm1}--\ref{determxi} below. We start with the following simple observation.
\begin{Lm}\label{conjugate}
Consider the element $u_0=u_1(1)\cdot u_2(1)\cdot\ldots\cdot u_7(1)\in\mathscr O_{\mathrm{reg}}^F$ defined in \eqref{u0}. For any permutation $\pi$ of $\{1,2,\ldots,7\}$, there is an element $u\in\mf U^F$ such that
\[u\cdot u_0\cdot u^{-1}=u_{\pi(1)}(1)\cdot u_{\pi(2)}(1)\cdot\ldots\cdot u_{\pi(7)}(1).\]
In particular, $u_0$ is conjugate to $u_0^{-1}$ in $\mf U^F\subseteq\mf G^F$.
\end{Lm}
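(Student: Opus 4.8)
The plan is to exploit heavily that $p=2$. For each $i$ one has $u_i(1)^2=u_i(1+1)=u_i(0)=1$ and $u_i(1)^{-1}=u_i(-1)=u_i(1)$, so every $u_i(1)$ is an involution in $\mf U^F$. Consequently, conjugating a reordered product by its first factor cyclically permutes the factors: for instance
\[u_1(1)\cdot u_0\cdot u_1(1)^{-1}=u_1(1)^2\,u_2(1)\cdots u_7(1)\,u_1(1)^{-1}=u_2(1)u_3(1)\cdots u_7(1)u_1(1),\]
and the identical computation applies to any product $u_{a_1}(1)\cdots u_{a_7}(1)$. First I would record this ``cyclic shift'' move, together with the elementary fact that if $\alpha_i$ and $\alpha_j$ are not joined in the Dynkin diagram of $E_7$ then the root subgroups $\mf U_{\alpha_i}$ and $\mf U_{\alpha_j}$ commute, so that two \emph{consecutive} factors $u_i(1)u_j(1)$ in such a product may be interchanged whenever $\{i,j\}$ is a non-edge of the diagram.

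Encoding the reordered product $u_{a_1}(1)\cdots u_{a_7}(1)$ by the tuple $(a_1,\dots,a_7)$, these two moves become: (i) a cyclic rotation of the tuple (always available, and realised by an explicit $\mf U^F$-conjugation); and (ii) the transposition of two consecutive entries whose labels form a non-edge of the Dynkin diagram (available whenever they are adjacent in the tuple, and realised trivially because the corresponding factors commute). Since tuples related by type-(ii) moves represent \emph{equal} group elements, and since the symmetric group $\mathfrak S_7$ is generated by transpositions, it suffices to realise, via moves (i) and (ii), an arbitrary transposition of the labels; using $(i\,j)=(i\,k)(j\,k)(i\,k)$ and the observation that every edge $\{i,j\}$ of the $E_7$ diagram admits a node $k$ adjacent to neither $i$ nor $j$, one may even restrict to transpositions of labels forming a non-edge. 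The cleanest way to conclude is to note that there is a cyclic ordering of the simple roots in which \emph{any} two cyclically-consecutive roots are non-adjacent in the Dynkin diagram, e.g.\ $\alpha_1,\alpha_2,\alpha_3,\alpha_5,\alpha_7,\alpha_4,\alpha_6$. Once the tuple $(1,2,\dots,7)$ has been brought into such a ``good'' form by moves (i), (ii), \emph{every} cyclically-consecutive transposition becomes admissible, and these transpositions together with the cyclic rotation generate all of $\mathfrak S_7$; so one then reaches every reordering, proving the lemma.

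The main obstacle is this last reshuffling step: producing an explicit sequence of cyclic shifts and commuting interchanges carrying $(1,2,\dots,7)$ to a good ordering. Care is required here because $\alpha_4$ is the branch node of the $E_7$ diagram and thus has three neighbours, so $u_4(1)$ fails to commute with three of the other factors and cannot simply be ``slid past'' its neighbours; concretely one first uses the three leaf factors $u_1(1)$, $u_2(1)$, $u_7(1)$ -- each of which commutes with all but one of the remaining factors, hence can be slid freely once the obstructing partner has been cyclically rotated out of the way -- to open up space, and then rearranges. This is a finite and routine check. Finally, the last assertion of the lemma is the special case $\pi(i)=8-i$, since in characteristic $2$ one has $u_0^{-1}=u_7(1)^{-1}\cdots u_1(1)^{-1}=u_7(1)\cdots u_1(1)$.
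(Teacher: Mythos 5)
Your approach is the same in spirit as the paper's: both reduce the problem to the two elementary moves — conjugating by the first factor to produce a cyclic shift, and swapping two adjacent commuting factors — and both observe that these moves are available for the $u_i(1)$'s exactly as they are for the simple reflections $s_i$. The paper then simply invokes Casselman's algorithm (\cite[Lemma 1.4]{casscox}), which proves by an explicit induction that these two moves connect any two Coxeter words for a Coxeter system whose graph is a tree, and notes that the identical argument applies verbatim to the $u_i(1)$'s. You instead try to re-derive this reachability fact from scratch via a ``good cyclic ordering,'' and this is where a genuine gap appears.

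The problem is the step ``these transpositions together with the cyclic rotation generate all of $\mathfrak S_7$; so one then reaches every reordering.'' Generation of $\mathfrak S_7$ as an abstract group does not imply reachability here, because the availability of move (ii) is state-dependent: it can only be applied at a position $j$ when the \emph{current} labels at positions $j$ and $j+1$ form a non-edge. From a good ordering all seven cyclically-adjacent swaps are indeed available, but after performing any one of them the resulting tuple is generally no longer good, so you cannot freely compose these transpositions to realise an arbitrary element of $\mathfrak S_7$. What you actually need is connectivity of the graph whose vertices are the $7!$ orderings and whose edges are the (state-dependent) moves, and that requires a different argument — e.g.\ the induction on a leaf of the tree that Casselman carries out, or equivalently the fact that all acyclic orientations of a tree are connected by source-to-sink flips. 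Your reduction to transpositions via $(i\,j)=(i\,k)(j\,k)(i\,k)$ does not rescue this, because ``realising'' a non-edge transposition of labels from an \emph{arbitrary} ordering (not just the good one) is itself the problem being solved. In addition, the preliminary step of carrying $(1,2,\ldots,7)$ to a good ordering is merely asserted to be ``finite and routine'' rather than carried out. A minor further comment: your heavy reliance on $p=2$ (to make each $u_i(1)$ an involution) is unnecessary for the cyclic shift — one conjugates by $u_1(1)^{-1}$, characteristic-free, just as one conjugates by $s_1$ in $\mf W$ — and $p=2$ is only genuinely used for the final remark that $u_0^{-1}=u_7(1)\cdots u_1(1)$, which both you and the paper handle the same way.
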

\begin{proof}
First consider the Weyl group $\mf W$ of $\mf G$, viewed as the irreducible finite Coxeter group of type $E_7$ with simple reflections $\mf S=\{s_1, s_2, \ldots, s_7\}$ (see \Cref{ParHC}). It is well-known that any two Coxeter elements of $(\mf W, \mf S)$ are conjugate in $\mf W$, that is, for any permutation $\pi$ of $\{1, 2,\ldots, 7\}$ there exists some $w\in\mf W$ such that
\[w\cdot(s_1\cdot s_2\cdot\ldots\cdot s_7)\cdot w^{-1}=s_{\pi(1)}\cdot s_{\pi(2)}\cdot\ldots\cdot s_{\pi(7)}.\]
More precisely, \cite[\S 1]{casscox} provides an algorithm to compute such an element $w$ which is only based on the facts that an element $s_i$ ($1\leqslant i\leqslant 7$) in the first (respectively, last) position of a given Coxeter word can be shifted to the last (respectively, first) position by means of conjugation with $s_i$, and that $s_i$, $s_j$ ($1\leqslant i, j\leqslant 7$, $i\neq j$) commute if and only if they are not linked in the Coxeter graph of $(\mf W, \mf S)$. But the analogous statements hold for the $u_i(1)$, $1\leqslant i\leqslant 7$, so we can just mimic the proof of \cite[Lemma 1.4]{casscox} to obtain an element $u\in\mf U^F$ (a product of certain $u_i(1)$, $1\leqslant i\leqslant 7$; note that $u_i(1)=u_i(-1)$ since $k$ has characteristic $2$) such that
\[u\cdot u_0\cdot u^{-1}=u_{\pi(1)}(1)\cdot u_{\pi(2)}(1)\cdot\ldots\cdot u_{\pi(7)}(1).\]
In particular, since $u_0^{-1}=u_7(1)\cdot u_6(1)\cdot\ldots\cdot u_1(1)$, $u_0$ is conjugate to $u_0^{-1}$ in $\mf U^F\subseteq\mf G^F$.
\end{proof}

\begin{Empty}\label{scalarpm1}
Let us denote by $\rho_x\in\Uch(\mf G^F)$ the unipotent character corresponding to $x\in\overline X(\mf W)$ in \eqref{ParUch}. The discussion in \Cref{Parcompatible} together with \cite[3.8]{Sh1} (or \cite[3.6]{LuRCS}) shows that $\rho_{x_1}=E_7[\zeta]$, $\rho_{x_2}=E_7[-\zeta]$ are the two cuspidal unipotent characters in Carter's table \cite[pp.~482-483]{C}. We have
\[R_{x_1}=\sum_{x\in\overline X(\mf W)}\{x,x_1\}\Delta(x)\rho_x=-\frac12\rho_{x_1}+\frac12\rho_{x_2}+\sum_{x\in\overline X(\mf W)\backslash\{x_1,x_2\}}\{x,x_1\}\rho_x,\]
\[R_{x_2}=\sum_{x\in\overline X(\mf W)}\{x,x_2\}\Delta(x)\rho_x=-\frac12\rho_{x_2}+\frac12\rho_{x_1}+\sum_{x\in\overline X(\mf W)\backslash\{x_1,x_2\}}\{x,x_2\}\rho_x.\]
(Note that $\Delta(x_1)=\Delta(x_2)=-1$, see \cite[4.14]{Luchars}.) Now consider the character $\overline\rho_{x_1}$, defined by
\[\overline\rho_{x_1}(g):=\overline{\rho_{x_1}(g)}\quad\text{for }g\in\mf G^F.\]
$\overline\rho_{x_1}$ is again a cuspidal unipotent (irreducible) character of $\mf G^F$. Since the character field of the two cuspidal unipotent characters of groups of type $E_7$ contains non-real elements (see \cite[5.4, Table 1]{GSchur}), we conclude $\overline\rho_{x_1}=\rho_{x_2}$. In view of \Cref{conjugate} and since $\{x,x_1\}=\{x,x_2\}$ for any $x\in\overline X(\mf W)\backslash\{x_1, x_2\}$, we obtain
\[\xi_{x_1}q^{7/2}=\xi_{x_1}\chi_1(u_{0})=R_{x_1}(u_{0})=R_{x_2}(u_{0})=\xi_{x_2}\chi_2(u_{0})=\xi_{x_2}q^{7/2},\]
which also equals $\overline{R_{x_2}(u_{0})}$ (using once again \Cref{conjugate}). We deduce $\xi_{x_1}=\xi_{x_2}=\overline\xi_{x_2}$ and thus $\xi_{x_1}=\xi_{x_2}\in\{\pm1\}$, since $\xi_{x_2}\overline\xi_{x_2}=1$.
\end{Empty}

\begin{Empty}\label{scalarfixsign}
In order to determine the sign $\xi:=\xi_{x_1}=\xi_{x_2}\in\{\pm1\}$, we want to apply a formula in \cite[3.6(b)]{GCH}, as in \cite{HE6p3}. However, we will need more theoretical ingredients than in \cite{HE6p3}, most notably the generalised Springer correspondence (see \Cref{GenSpring}). We consider the Hecke algebra of the group $\mf G^F$ with its $BN$-pair $(\mf B^F,N_{\mf G}(\mf T)^F)$, that is, the endomorphism algebra
\[\mathcal H:=\End_{\overline{\mathbb Q}_\ell\mf G^F}\bigl(\overline{\mathbb Q}_\ell[{\mf G^F}/{\mf B^F}]\bigr)^\mathrm{opp}\]
(\enquote{opp} stands for the opposite algebra). $\mathcal H$ has a $\overline{\mathbb Q}_\ell$-basis $\{T_w\mid w\in\mf W\}$, where
\[T_w\colon\overline{\mathbb Q}_\ell[{\mf G^F}/{\mf B^F}]\rightarrow\overline{\mathbb Q}_\ell[{{\mf G}^F}/{\mf B^F}],\quad x\mf B^F\mapsto\sum_{\substack{y\mf B^F\in{\mf G^F}/{\mf B^F} \\ x^{-1}y\in\mf B^F\dot w \mf B^F}}y\mf B^F,\]
for $w\in\mf W$. Recall from \ref{ParHC} that $\mf S=\{s_1,\ldots,s_7\}$ is a set of Coxeter generators for $\mf W$. Denote by $\ell\colon\mf W\rightarrow\mathbb Z_{\geqslant0}$ the length function of $\mf W$ with respect to $\mf S$. Then the multiplication in $\mathcal H$ is determined by the following equations.
\[T_s\cdot T_w=\begin{cases}\hfil T_{sw}\quad&\text{if}\quad\ell(sw)=\ell(w)+1 \\ qT_{sw}+(q-1)T_{w}\quad&\text{if}\quad\ell(sw)=\ell(w)-1\end{cases}\quad(\text{for }s\in\mf S,\, w\in\mf W).\]
The irreducible characters of $\mf W$ naturally parametrise the isomorphism classes of irreducible modules of $\mathcal H$, see \cite[\S 11D]{CR}. Given $\phi\in\Irr(\mf W)$, let $V_\phi$ be the corresponding module of $\mathcal H$, and let $\rho_\phi\in\Uch(\mf G^F)$ be the image of $\phi$ under the map $\Irr(\mf W)\hookrightarrow\mathfrak S_\mf W\xrightarrow{\sim}\Uch(\mf G^F)$, see \Cref{Parcompatible}. By \cite[3.6]{GCH} and \cite[\S 8.4]{GePf}, we have
\begin{equation}\label{HeckeUch} 
\sum_{\phi\in\Irr(\mf W)}\rho_\phi(g)\Tr(T_w,V_\phi)=\frac{|O_g\cap\mf B^F\dot w\mf B^F|\cdot|C_{\mf G^F}(g)|}{|\mf B^F|}
\end{equation}
for any $g\in\mf G^F$ and $w\in\mf W$, where $\dot w\in N_{\mf G}(\mf T)^F$ is a representative of $w$, $O_g\subseteq\mf G^F$ denotes the $\mf G^F$-conjugacy class of $g$ and $\Tr(T_w,V_\phi)$ is the trace of the linear map on $V_\phi$ defined by $T_w$. The character table of $\mathcal H$ is contained in {\sffamily {CHEVIE}} \cite{CHEVIE}, so the numbers $\Tr(T_w,V_\phi)$ are known. We want to apply \eqref{HeckeUch} with $g=u_{0}$ to get hold of the sign $\xi$. In order to achieve this, we thus need some information about the values $\rho_\phi(u_0)$ for $\phi\in\Irr(\mf W)$. By \eqref{Fourier} and \eqref{scalars}, we have
\[\rho_\phi=\Delta(x_\phi)\sum_{x\in\overline X(\mf W)}{\{x_\phi,x\}}R_x=\sum_{x\in\overline X(\mf W)}{\{x_\phi,x\}}\xi_x\chi_{A_x}.\]
\end{Empty}
\begin{Lm}\label{vanishingAx}
Let $x\in\overline X(\mf W)$ be such that, via the generalised Springer correspondence \eqref{SpringerF}, $A_x$ arises from a pair $\iota=(\mathscr O,\mathscr E)\in\mathscr N_\mf G^F$ where $\mathscr O\neq\mathscr O_\mathrm{reg}$. Then
\[\chi_{A_x}(u_0)=R_x(u_0)=0.\]
\end{Lm}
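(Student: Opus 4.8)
The plan is to reduce everything to the vanishing of $\chi_{A_x}(u_0)$: since $R_x=\xi_x\chi_{A_x}$ by \eqref{scalars}, the equality $R_x(u_0)=0$ follows at once from $\chi_{A_x}(u_0)=0$. Accordingly, write $A_x=A_\iota$ in the notation of \ref{PreAlg}, so that $\iota=(\mathscr O,\mathscr E)\in\mathscr N_\mf G^F$ with $\mathscr O\neq\mathscr O_\mathrm{reg}$, and set $\tau(\iota)=(\mf L,\mathscr O_0,\mathscr E_0)\in\mathscr M_\mf G^F$.

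First I would establish the geometric fact that $u_0\notin\overline{\mathscr O}$. Indeed, $\mathscr O_\mathrm{reg}$ is the unique dense unipotent conjugacy class of $\mf G$, so $\overline{\mathscr O_\mathrm{reg}}=\mf G_{\mathrm{uni}}$ (as recalled in \ref{valcuspchar}), whereas $\overline{\mathscr O}$ is a proper closed $\mf G$-stable subvariety of $\mf G_{\mathrm{uni}}$. If $u_0$ lay in $\overline{\mathscr O}$, then, $\overline{\mathscr O}$ being closed and $\mf G$-stable, we would get $\mathscr O_\mathrm{reg}\subseteq\overline{\mathscr O}$ and hence $\mf G_{\mathrm{uni}}=\overline{\mathscr O_\mathrm{reg}}\subseteq\overline{\mathscr O}$, forcing $\mathscr O=\mathscr O_\mathrm{reg}$, contrary to assumption. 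So $u_0\notin\overline{\mathscr O}$.

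Next I would feed this into \eqref{CSIC}, which gives $A_\iota|_{\mf G_{\mathrm{uni}}}\cong\IC(\overline{\mathscr O},\mathscr E)[\dim\mathscr O+\dim\mf Z(\mf L)^\circ]$. Since $u_0\notin\overline{\mathscr O}$, all stalk cohomology groups $\mathscr H^i_{u_0}$ of this complex vanish; as $u_0$ is unipotent, this yields $\mathscr H^i_{u_0}(A_\iota)=0$ for every $i\in\mathbb Z$. By the definition of the characteristic function in \ref{CSwithFrob}, it follows that $\chi_{A_\iota,\varphi_{A_\iota}}(u_0)=\sum_{i\in\mathbb Z}(-1)^i\Trace(\varphi_{i,u_0},\mathscr H^i_{u_0}(A_\iota))=0$ for any isomorphism $\varphi_{A_\iota}\colon F^\ast A_\iota\xrightarrow{\sim}A_\iota$ --- in particular for the normalisation underlying $\chi_{A_x}$. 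Thus $\chi_{A_x}(u_0)=0$, and hence $R_x(u_0)=\xi_x\chi_{A_x}(u_0)=0$ by \eqref{scalars}. I do not expect any real obstacle in this argument; the one point deserving attention is the observation that the value of the characteristic function of $A_\iota$ at the unipotent element $u_0$ depends only on the restriction $A_\iota|_{\mf G_{\mathrm{uni}}}$, so that the (a priori larger) support of $A_\iota$ on the whole of $\mf G$ is irrelevant to this computation.
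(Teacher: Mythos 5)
Your proof is correct and follows essentially the same route as the paper's: both arguments rest on \eqref{CSIC} to identify the support of $A_x|_{\mf G_{\mathrm{uni}}}$ with $\overline{\mathscr O}$, observe that the regular unipotent element $u_0$ cannot lie in $\overline{\mathscr O}$ when $\mathscr O\neq\mathscr O_{\mathrm{reg}}$ (since $\mathscr O_{\mathrm{reg}}$ is the unique class with closure $\mf G_{\mathrm{uni}}$), and then pass from $\chi_{A_x}(u_0)=0$ to $R_x(u_0)=0$ via \Cref{Shoji}/\eqref{scalars}. You merely spell out two details the paper leaves implicit: the $\mf G$-stability argument for why $u_0\notin\overline{\mathscr O}$, and the remark that stalk cohomology at a unipotent point is controlled by the restriction to $\mf G_{\mathrm{uni}}$.
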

\begin{proof}
By \eqref{CSIC}, the support of $A_x|_{\mf G_\mathrm{uni}}$ is $\overline{\mathscr O}$ which is a union of conjugacy classes of $\mf G$ and therefore does not contain any regular unipotent elements in case $\mathscr O\neq\mathscr O_{\mathrm{reg}}$, as $\mathscr O_{\mathrm{reg}}$ is the unique class of $\mf G$ whose closure is $\mf G_{\mathrm{uni}}$. So we have $\chi_{A_x}(u_0)=0$ and then also $R_x(u_0)=0$ in view of \Cref{Shoji}.
\end{proof}
\begin{Empty}\label{GenSpringerOreg}
Let us now describe the images of the pairs $\iota=(\mathscr O_\mathrm{reg},\mathscr E)\in\mathscr N_\mf G^F$ under the map \eqref{SpringerF}, following Spaltenstein \cite[p.~331]{Sp}. Since $F$ acts trivially on $A(u_0)\cong{\mathbb Z}/{4\mathbb Z}$, there are (up to isomorphism) four $F$-stable irreducible $\mf G$-equivariant local systems on the regular unipotent class $\mathscr O_\mathrm{reg}$. We will write $\mathscr E^{\zeta^i}$ ($1\leqslant i\leqslant4$) for the local system corresponding to the irreducible character of $A(u_0)$ which takes the value $\zeta^i$ at $a_0=u_0C_{\mf G^\circ}(u_0)\in A(u_0)$, where $\zeta$ is the primitive fourth root of unity in $\overline{\mathbb Q}_\ell$ specified in \ref{valcuspchar}, \ref{Parcompatible}. First consider $\iota=(\mathscr O_{\mathrm{reg}},\mathscr E^1)\in\mathscr N_\mf G^F$. The corresponding element of $\mathscr M_\mf G^F$ is $(\mf T,\{1\},\overline{\mathbb  Q}_\ell)$, and among the simple summands of $K_{(\mf T,\{1\},\overline{\mathbb Q}_\ell)}$, the character sheaf $A_x\cong A_{(\mathscr O_\mathrm{reg},1)}$ is parametrised by the trivial character $1_\mf W$ of $\mf W=W_\mf G(\mf T)$, whence $R_x=R_{1_\mf W}=\rho_{1_\mf W}=1_{\mf G^F}$. Next, the pairs $(\mathscr O_\mathrm{reg},\mathscr E^{\pm\zeta})$ correspond to $(\mf G,\mathscr O_\mathrm{reg},\pm\zeta)$, so they give rise to the two cuspidal (unipotent) character sheaves $A_{(\mathscr O_\mathrm{reg},\pm\zeta)}$. With the arrangements in \ref{valcuspchar}, \ref{scalar1}, we have $A_{x_1}\cong A_{(\mathscr O_\mathrm{reg},\zeta)}$ and $A_{x_2}\cong A_{(\mathscr O_\mathrm{reg},-\zeta)}$. Finally, to the pair $(\mathscr O_\mathrm{reg},\mathscr E^{-1})\in\mathscr N_\mf G^F$ is associated the triple $(\mf L,\mathscr O_0,\mathscr E_0)\in\mathscr M_\mf G^F$ where $\mf L\subseteq\mf G$ is the Levi complement of some parabolic subgroup of $\mf G$ such that ${\mf L}/{{\mf Z(\mf L)}^\circ}$ is simple of type $D_4$, and $(\mathscr O_0,\mathscr E_0)$ is the unique cuspidal pair for $\mf L$, see \cite[15.2]{LuIC}. Among the simple summands of $K_{(\mf L,\mathscr O_0,\mathscr E_0)}$, the character sheaf $A_{(\mathscr O_\mathrm{reg},\mathscr E^{-1})}$ is described by the trivial character of $\mf W_{\mf G}(\mf L)\cong W(B_3)$. Let $x_0\in\overline X(\mf W)$ be such that $A_{x_0}\cong A_{(\mathscr O_\mathrm{reg},\mathscr E^{-1})}$, and let $\mathscr F\subseteq\Irr(\mf W)$ be the family for which $x_0\in\mathscr M(\mathscr G_\mathscr F)$. Then, for $\phi\in\mathscr F$, we have
\[\rho_\phi(u_0)=\sum_{x\in\mathscr M(\mathscr G_\mathscr F)}\{x_\phi,x\}R_x(u_0)=\{x_\phi,x_0\}R_{x_0}(u_0).\]
Using the notation of \cite[p.~482]{C}, we obtain
\[\rho_{\phi_{56,3}}(u_0)=-\rho_{\phi_{35,4}}(u_0)=-\rho_{\phi_{21,6}}(u_0)=\frac12 R_{x_0}(u_0).\]
Now, considering the two characters $\phi_{512,11}$, $\phi_{512,12}\in\Irr(\mf W)$ of degree $512$, we see that
\begin{align}\label{valexcchars}
\begin{split}
\rho_{\phi_{512,11}}(u_0)=\{x_{\phi_{512,11}},x_1\}\xi\chi_{A_{x_1}}(u_0)+\{x_{\phi_{512,11}},x_2\}\xi\chi_{A_{x_2}}(u_0)=\phantom{-}\xi q^{7/2}, \\
\rho_{\phi_{512,12}}(u_0)=\{x_{\phi_{512,12}},x_1\}\xi\chi_{A_{x_1}}(u_0)+\{x_{\phi_{512,12}},x_2\}\xi\chi_{A_{x_2}}(u_0)=-\xi q^{7/2}.
\end{split}
\end{align}
For any $\phi\in\Irr(\mf W)$ which is not one of $1_\mf W=\phi_{1,0}$, $\phi_{56,3}$, $\phi_{35,4}$, $\phi_{21,6}$, $\phi_{512,11}$, $\phi_{512,12}$, we have $\rho_\phi(u_0)=0$. We evaluate the left side of \eqref{HeckeUch} with $g=u_0$ and
\[w=w_\mathrm c:=s_1\cdot s_2\cdot s_3\cdot s_4\cdot s_5\cdot s_6\cdot s_7\in\mf W,\]
a Coxeter element of $\mf W$. (In fact any other Coxeter element of $\mf W$ would lead to the same result, see \cite[8.2.6]{GePf}.) We obtain
\[q^7+\frac12R_{x_0}(u_0)\left(\Tr(T_{w_\mathrm c},V_{\phi_{56,3}})-\Tr(T_{w_\mathrm c},V_{\phi_{35,4}})-\Tr(T_{w_\mathrm c},V_{\phi_{21,6}})\right)+2\xi q^7.\]
Here, $\Tr(T_{w_\mathrm c},V_{\phi_{56,3}})-\Tr(T_{w_\mathrm c},V_{\phi_{35,4}})-\Tr(T_{w_\mathrm c},V_{\phi_{21,6}})=2q^5$, so we have
\begin{equation}\label{sumu0wc}
\sum_{\phi\in\Irr(\mf W)}\rho_\phi(u_0)\Tr(T_{w_\mathrm c},V_\phi)=q^7(1+2\xi)+q^5R_{x_0}(u_0).
\end{equation}
\end{Empty}

\begin{Lm}\label{Rx0u0}
As in \Cref{GenSpringerOreg} (and with the notations there), let $x_0\in\overline X(\mf W)$ be such that $A_{x_0}\cong A_{(\mathscr O_{\mathrm{reg}},\mathscr E^{-1})}$. Then there is a sign $\delta\in\{\pm1\}$ such that
\[R_{x_0}(u_0)=\delta q^2.\]
\end{Lm}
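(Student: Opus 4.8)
The plan is to combine \Cref{Shoji} with the generalised Springer correspondence of \Cref{GenSpring}. By \eqref{scalars} we may write $R_{x_0}=\xi_{x_0}\chi_{A_{x_0}}$ with $\xi_{x_0}\overline{\xi_{x_0}}=1$, so it suffices to compute $\chi_{A_{x_0}}(u_0)$ up to a factor of absolute value $1$ and then to check that $R_{x_0}(u_0)$ is real. Recall from \Cref{GenSpringerOreg} that $A_{x_0}=A_\iota$ for $\iota=(\mathscr O_\mathrm{reg},\mathscr E^{-1})\in\mathscr N_{\mf G}^F$, with $\tau(\iota)=(\mf L,\mathscr O_0,\mathscr E_0)$ where $\mf L$ is the Levi subgroup with $\mf L/\mf Z(\mf L)^\circ$ of type $D_4$. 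By \eqref{CSIC}, $A_{x_0}|_{\mf G_\mathrm{uni}}$ is, up to the shift by $\dim\mathscr O_\mathrm{reg}+\dim\mf Z(\mf L)^\circ$, the intersection cohomology complex of $\mf G_\mathrm{uni}=\overline{\mathscr O_\mathrm{reg}}$ with coefficients in $\mathscr E^{-1}$; since $u_0$ lies in the open dense smooth subvariety $\mathscr O_\mathrm{reg}$ of $\mf G_\mathrm{uni}$, the only non-vanishing cohomology stalk of $A_{x_0}$ at $u_0$ is $\mathscr H^{-\dim\mathscr O_\mathrm{reg}-\dim\mf Z(\mf L)^\circ}_{u_0}(A_{x_0})\cong\mathscr E^{-1}_{u_0}$, a one-dimensional space. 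Hence $\chi_{A_{x_0}}(u_0)$ is, up to sign, a single Frobenius eigenvalue on the line $\mathscr E^{-1}_{u_0}$.

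The next — and main — step is to pin down the absolute value of that eigenvalue. By Lusztig's normalisation of characteristic functions of $F$-stable character sheaves, the eigenvalue of $\varphi_{A_{x_0}}$ on $\mathscr E^{-1}_{u_0}$ equals $q^{(\dim\mf G-\dim\mathscr O_\mathrm{reg}-\dim\mf Z(\mf L)^\circ)/2}$ times a root of unity; this is exactly the kind of statement used in \Cref{valcuspchar} for the two cuspidal character sheaves, where $\mf L=\mf G$ and the exponent is $(\dim\mf G-\dim\mathscr O_\mathrm{reg})/2=7/2$ (compare \cite[19.7]{LuCSDC4}, \cite[25.1]{LuCS5}). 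Since $\mf L$ has rank $7$ and semisimple rank $4$, we have $\dim\mf Z(\mf L)^\circ=3$, and as $\dim\mf G=133$, $\dim\mathscr O_\mathrm{reg}=126$, the exponent is $(133-126-3)/2=2$. (Equivalently, one observes that $\dim\supp A_{x_0}=\dim\mathscr O_\mathrm{reg}+\dim\mf Z(\mf L)^\circ=129$ — which, by \Cref{ICindA0} and \Cref{GenSpringerOreg}, is the dimension of the closure of the set of $\mf G$-conjugates of $\mf Z(\mf L)^\circ\mathscr O_\mathrm{reg}$, because $A_{x_0}$ corresponds to the trivial character of $W_{\mf G}(\mf L)\cong W(B_3)$ — and then reads off the relevant power of $q$, or the integer $d_\iota$ of \Cref{PreAlgF}, from \texttt{ICCTable} in {\sffamily CHEVIE}, see \Cref{RemarkAlg}.) Consequently $|\chi_{A_{x_0}}(u_0)|=q^2$, and therefore $R_{x_0}(u_0)=\eta\,q^2$ for some $\eta\in\overline{\mathbb Q}_\ell$ with $\eta\overline\eta=1$.

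It remains to see that $\eta\in\{\pm1\}$, for which it is enough that $R_{x_0}(u_0)$ be real. The family $\mathscr F\subseteq\Irr(\mf W)$ with $x_0\in\mathscr M(\mathscr G_{\mathscr F})$ is the one containing $\phi_{56,3},\phi_{35,4},\phi_{21,6}$, and $\lambda_{A_{x_0}}=\lambda_{A_0}$ is real; inspecting the explicit Fourier matrix of this family (as recorded in \cite{Luchars} or in {\sffamily CHEVIE}) shows that all the pairings $\{\overline x_\rho,x_0\}$, $\rho\in\Uch(\mf G^F)$, are real. Hence $R_{x_0}=\sum_{\rho\in\Uch(\mf G^F)}\{\overline x_\rho,x_0\}\Delta(\overline x_\rho)\rho$ is a linear combination of irreducible characters of $\mf G^F$ with real coefficients, so $\overline{R_{x_0}(u_0)}=R_{x_0}(u_0^{-1})=R_{x_0}(u_0)$, the last equality by \Cref{conjugate}. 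A real number of absolute value $q^2$ is $\pm q^2$, which gives the assertion with $\delta=\eta$. The only genuinely non-formal input is the middle step: identifying the exponent $2$, which relies on the explicit generalised Springer correspondence for $E_7$ (due to Spaltenstein) together with Lusztig's weight normalisation of characteristic functions; the first and last steps are essentially formal.
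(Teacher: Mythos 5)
Your reduction to the stalk computation at $u_0$ and your argument at the end (realness of the relevant Fourier entries, $u_0\sim u_0^{-1}$ from \Cref{conjugate}, hence $\delta=\overline\delta$ and $\delta\overline\delta=1$ force $\delta\in\{\pm1\}$) are fine and match the paper. But the middle step, which you yourself flag as \enquote{the only genuinely non-formal input}, is exactly where the argument is not carried out, and the way you phrase it conceals the real difficulty.

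You assert that Lusztig's normalisation \cite[25.1]{LuCS5} forces the Frobenius eigenvalue on $\mathscr E^{-1}_{u_0}$ to be $q^{(\dim\mf G-\dim\supp A_{x_0})/2}=q^2$ times a root of unity. That normalisation, however, prescribes the eigenvalue only on the stalk of the local system over the \emph{open dense} piece of $\supp A_{x_0}$, and it is formulated in terms of the cuspidal datum $(\mf L,\Sigma_0,\mathscr E_0)$ on the Levi, not in terms of stalks of the induced complex at arbitrary points. The point $u_0\in\mathscr O_{\mathrm{reg}}$ lies in $\mf G_{\mathrm{uni}}$, which is a \emph{boundary} stratum of $\supp A_{x_0}$ (the open part consists of conjugates of $\mf Z(\mf L)^\circ\mathscr O_0$). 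For boundary stalks of an IC sheaf the Frobenius weights are generally different; so the fact that this particular boundary stalk still carries weight $q^2$ is a theorem, not a formality. The paper proves it in two steps that you do not supply: (a) by \Cref{Alg} and the unitriangularity of $(p_{\iota',\iota})$ from \Cref{RemarkAlg}, applied with $\iota=(\mathscr O_{\mathrm{reg}},\mathscr E^{-1})$, one has $X_\iota(u_0)=Y_\iota(u_0)$ because $\mathscr O_{\mathrm{reg}}$ is the maximal unipotent class, and since $d_\iota=0$ this gives $\chi_{A_\iota,\varphi_{A_\iota}}(u_0)=-\Trace(\psi_{\iota,u_0},\mathscr E^{-1}_{u_0})$, a root of unity up to sign, \emph{for the finite-order normalisation} $\varphi_{A_\iota}$ of \Cref{PreAlgF}; (b) the Lusztig-normalised $\varphi_{A_{x_0}}$ differs from $\varphi_{A_\iota}$ by the factor $q^{(\dim(\mf L/\mf Z(\mf L)^\circ)-\dim\mathscr O_0)/2}=q^2$, coming from the prescribed normalisation of $\mathscr E_0$ on the type-$D_4$ Levi $\mf L$ (via \cite[3.2]{Luvaluni}, \cite[4.4]{Gvaluni}). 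Your parenthetical about reading off \enquote{the relevant power of $q$, or the integer $d_\iota$} from \texttt{ICCTable} confuses matters further: here $d_\iota=\tfrac12(129-126-3)=0$, not $2$; the factor $q^2$ does not come from $d_\iota$ but from the change of normalisation on $\mf L$. The numerical coincidence $(\dim\mf G-\dim\mathscr O_{\mathrm{reg}}-\dim\mf Z(\mf L)^\circ)/2=(\dim(\mf L/\mf Z(\mf L)^\circ)-\dim\mathscr O_0)/2$ holds precisely because $d_\iota=0$ in this instance, i.e.\ because $u_0$ is regular unipotent and $A_{x_0}$ is the summand of $K_{(\mf L,\mathscr O_0,\mathscr E_0)}$ attached to the trivial character of $W_{\mf G}(\mf L)$; you should make this explicit rather than present the exponent formula as if it were a general property of stalks.
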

\begin{proof}
We keep the notation of \Cref{GenSpringerOreg} and set $\iota:=(\mathscr O_{\mathrm{reg}},\mathscr E^{-1})\in\mathscr N_{\mf G}^F$. The character sheaf $A_{x_0}\cong A_{\iota}\in\hat{\mf G}^{\mathrm{un}}$ is an irreducible direct summand of $K:=K_{(\mf L,\mathscr O_0,\mathscr E_0)}$, where $\mf L\subseteq\mf G$ is the Levi complement of some parabolic subgroup of $\mf G$ such that ${\mf L}/{{\mf Z(\mf L)}^\circ}$ is simple of type $D_4$, and $(\mathscr O_0,\mathscr E_0)$ is the unique cuspidal pair for $\mf L$. Among the simple summands of $K$, $A_{\iota}$ is uniquely characterised by the property
\[A_{\iota}|_{\mf G_{\mathrm{uni}}}\cong\IC(\mf G_{\mathrm{uni}},\mathscr E^{-1})[\dim{\mathscr O_{\mathrm{reg}}}+\dim{{\mf Z(\mf L)}^\circ}],\]
see \eqref{CSIC}. Now consider the functions $X_{\iota}$, $Y_{\iota}$ defined in \Cref{PreAlgF}. Using \Cref{Alg} and \Cref{RemarkAlg}, we see that
\[\chi_{A_\iota,\varphi_{A_\iota}}(u_0)=-X_{\iota}(u_0)=-Y_{\iota}(u_0)=-\Trace(\psi_{\iota,u_0},\mathscr E^{-1}_{u_0}).\]
In order to meet the requirements of \Cref{LusztigONB} (see \cite[25.1]{LuCS5}), we need to modify the isomorphism $\varphi_{A_\iota}\colon F^\ast A_\iota\xrightarrow{\sim}A_\iota$, as follows. Let $\Sigma_0:={\mf Z(\mf L)}^\circ\mathscr O_0$. By \cite[19.3]{LuCS4}, the image of $\Sigma_0$ (or $\mathscr O_0$) under the canonical map ${\mf L}\rightarrow{\mf L}/{{\mf Z(\mf L)}^\circ}$ is the regular unipotent class of ${\mf L}/{{\mf Z(\mf L)}^\circ}$. As in \cite[4.4]{Gvaluni}, we may single out a specific element $g_1\in(\Sigma_0/{\mf Z(\mf L)}^\circ)^F$. Following \cite[3.2]{Luvaluni}, the choice of $g_1$ then gives rise to an isomorphism $\varphi_0\colon F^\ast\mathscr E_0\xrightarrow{\sim}\mathscr E_0$ by requiring that it induces on the stalk of $\mathscr E_0$ at $g_1$ (or, more precisely, at the preimage of $g_1$ under $\mf L\rightarrow{\mf L/{{\mf Z(\mf L)}^\circ}}$) the map given by scalar multiplication with $q^{({\dim{({\mf L}/{{\mf Z(\mf L)}^\circ})}-\dim{\mathscr O_0}})/2}=q^2$. Once $\varphi_0$ is fixed, it naturally induces an isomorphism $F^\ast K\xrightarrow{\sim}K$, which in turn determines an isomorphism $\varphi_{A_{x_0}}\colon{F^\ast A_{x_0}}\xrightarrow{\sim}{{A_{x_0}}}$, see again \cite[3.2]{Luvaluni}. Then $\varphi_{A_{x_0}}$ satisfies the requirements formulated in \cite[25.1]{LuCS5}, so it is a valid choice for \Cref{LusztigONB}. In particular, we have $\langle\chi_{A_{x_0},\varphi_{A_{x_0}}},\chi_{A_{x_0},\varphi_{A_{x_0}}}\rangle=1$. On the other hand, recall from \Cref{PreAlgF} that the definition of $\varphi_{A_\iota}\colon F^\ast A_\iota\xrightarrow{\sim}A_\iota$ is based on the choice of an isomorphism $F^\ast\mathscr E_0\xrightarrow{\sim}\mathscr E_0$ which induces a map of finite order on the stalk of $\mathscr E_0$ at any element of $\mathscr O_0^F$. Modifying this isomorphism by multiplication with a root of unity, if necessary, we may thus achieve that $\varphi_{A_{x_0}}=q^2\cdot\varphi_{A_\iota}$. Now, by \Cref{Shoji}, we have 
\[R_{x_0}=\xi_{x_0}\chi_{A_{x_0},\varphi_{A_{x_0}}}\]
for some $\xi_{x_0}\in\overline{\mathbb Q}_\ell$ such that $\xi_{x_0}\overline\xi_{x_0}=1$. So we obtain
\begin{align*}
R_{x_0}(u_0)&=\xi_{x_0}\chi_{A_{x_0},\varphi_{A_{x_0}}}(u_0)=\xi_{x_0}q^2\cdot\chi_{A_\iota,\varphi_{A_\iota}}(u_0)=-\xi_{x_0}q^2\cdot\Trace(\psi_{\iota,u_0},\mathscr E^{-1}_{u_0}) \\
&=\delta q^2,
\end{align*}
where $\delta:=-\xi_{x_0}\Trace(\psi_{\iota,u_0},\mathscr E^{-1}_{u_0})\in\overline{\mathbb Q}_\ell$. Since the local system $\mathscr E^{-1}$ is one-dimensional and $\psi_{\iota,u_0}\colon\mathscr E^{-1}_{u_0}\rightarrow\mathscr E^{-1}_{u_0}$ has finite order, the number $\Trace(\psi_{\iota,u_0},\mathscr E^{-1}_{u_0})$ is a root of unity in $\overline{\mathbb Q}_\ell$, so $\delta$ satisfies $\delta\overline\delta=1$, that is, $\overline\delta=\delta^{-1}$. Using \Cref{conjugate}, we conclude
\[\delta q^2=R_{x_0}(u_0)=R_{x_0}(u_0^{-1})=\overline\delta q^2,\]
so we must have $\delta=\overline\delta=\delta^{-1}\in\{\pm1\}$, as desired.
\end{proof}

\begin{Empty}\label{determxi}
In view of \Cref{Rx0u0}, \eqref{HeckeUch} and \eqref{sumu0wc}, we have
\begin{equation}\label{xidelta}
\frac{|O_{u_0}\cap\mf B^F\dot w_\mathrm c\mf B^F|\cdot|C_{\mf G^F}(u_0)|}{|\mf B^F|}=q^7(1+2\xi+\delta),
\end{equation}
where $\xi\in\{\pm1\}$ is as in \Cref{scalarfixsign}, $\delta\in\{\pm1\}$ is as in \Cref{Rx0u0}, and $\dot w_\mathrm c\in N_{\mf G}(\mf T)^F$ is a representative of $w_\mathrm c\in\mf W$. We claim that $O_{u_0}\cap\mf B^F\dot w_\mathrm c\mf B^F\neq\emptyset$. Indeed, with the notation of \Cref{ParHC}, consider the longest element $w_0$ of $\mf W$ with respect to the usual length function on $\mf W$ determined by the simple roots $\Pi=\{\alpha_1,\ldots,\alpha_7\}\subseteq R^+$. $\mf W$ is a Coxeter group with generators $\{s_1,\ldots,s_7\}$, and there is a natural action of $\mf W$ on the roots $R$. Using some standard properties of finite Coxeter groups (see, e.g., \cite[Appendix]{StChev}), the element $w_0$ is then characterised by the property $w_0(R^+)=-R^+$, so we have $-w_0(\Pi)=\Pi$. More precisely, $-w_0$ defines a graph automorphism of the Dynkin diagram of $E_7$, but the only such automorphism is the identity, so $-w_0(\alpha_i)=\alpha_i$ for $1\leqslant i\leqslant7$. Let us choose a representative $\dot w_0\in N_{\mf G}(\mf T)^F$ of $w_0\in\mf W$. For $1\leqslant i\leqslant 7$, we have $\dot w_0\mf U_{\alpha_i}\dot w_0^{-1}=\mf U_{-\alpha_i}$, so $\dot w_0 u_i(1)\dot w_0^{-1}\in\mf U_{-\alpha_i}$. Now 
\[\mf U_{-\alpha_i}\subseteq\mf L_{\{s_i\}}\subseteq\mf P_{\{s_i\}}=\mf B\cup\mf B s_i\mf B\]
(see, e.g., \cite[2.6.2]{C}). Since $\mf U_{-\alpha_i}\cap\mf B=\{1\}$, we must have $\dot w_0 u_i(1)\dot w_0^{-1}\in\mf B s_i\mf B$. As $w_\mathrm c=s_1\cdot\ldots\cdot s_7$ is a reduced expression for $w_\mathrm c$, we know that
\[\mf Bs_1\mf B\cdot\mf Bs_2\mf B\cdot\ldots\cdot\mf Bs_7\mf B=\mf Bw_\mathrm c\mf B,\]
and so
\[\dot w_0\cdot u_0\cdot\dot w_0^{-1}\in O_{u_0}\cap\mf Bw_\mathrm c\mf B=O_{u_0}\cap\mf B^F\dot w_\mathrm c\mf B^F.\]
Hence, the left side of \eqref{xidelta} is strictly positive. Thus \eqref{xidelta} implies that $\xi\neq-1$, so we must have $\xi=+1$. This proves \Cref{scalar1}.
\end{Empty}

\begin{Rem}
As described in \Cref{valcuspchar}, the isomorphisms $\varphi_{A_i}\colon F^\ast A_i\xrightarrow{\sim}A_i$ ($i=1,2$) are determined by choosing one of the four $\mf G^F$-conjugacy classes inside $\mathscr O_{\mathrm{reg}}^F$. The proof of \Cref{scalar1} relies on the following two properties of the $\mf G^F$-conjugacy class $O_{u_0}$ of $u_0$:
\begin{enumerate}
\item Any element of $O_{u_0}$ is $\mf G^F$-conjugate to its inverse, that is, $O_{u_0}=O_{u_0}^{-1}$;
\item We have $O_{u_0}\cap\mf B^F\dot w_\mathrm c\mf B^F\neq\emptyset$.
\end{enumerate}
Once $\xi$ is specified, a similar computation as in \Cref{determxi} shows that $O_{u_{a_0^2}}\cap\mf B^F\dot w_\mathrm c\mf B^F=\emptyset$. Hence, among the four $\mf G^F$-classes inside $\mathscr O_{\mathrm{reg}}^F$, the class $O_{u_0}$ is uniquely determined by (1) and (2) above. 
\end{Rem}

\begin{Empty}
We conclude by giving the values of the (unipotent) characters in the family 
\[\mathscr F_0=\left\{\rho_{\phi_{512,11}}, \rho_{\phi_{512,12}}, E_7[\zeta], E_7[-\zeta]\right\}\subseteq\overline X(\mf W)\]
at the regular unipotent elements. Using \eqref{Fourier}, \ref{vanishingAx} and \ref{scalar1}, we have for $u\in\mathscr O_{\mathrm{reg}}^F$:
\begin{align*}
\rho_{\phi_{512,11}}(u)=-\rho_{\phi_{512,12}}(u)=\bigl(\chi_{A_1,\varphi_{A_1}}(u)+\chi_{A_2,\varphi_{A_2}}(u)\bigr)/2; \\
E_7[\zeta](u)=-E_7[-\zeta](u)=-\bigl(\chi_{A_1,\varphi_{A_1}}(u)-\chi_{A_2,\varphi_{A_2}}(u)\bigr)/2.
\end{align*}
The values of the $\chi_{A_i,\varphi_{A_i}}$ are given at the end of \Cref{valcuspchar}. We thus get
\begin{center}
\begin{tabular}{|c|c|c|c|c|}
\hline
& $u\sim u_{0}$ & $u\sim u_{a_0}$ & $u\sim u_{a_0^2}$ & $u\sim u_{a_0^3}$ \\
\hline
$\rho_{\phi_{512,11}}(u)$ & $\phantom{-}q^{7/2}$ & $0$ & $-q^{7/2}$ & $0$ \\
\hline
$\rho_{\phi_{512,12}}(u)$ & $-q^{7/2}$ & $0$ & $\phantom{-}q^{7/2}$ & $0$ \\
\hline
$E_7[\zeta](u)$ & $0$ & $-q^{7/2}\zeta$ & $0$ & $\phantom{-}q^{7/2}\zeta$ \\
\hline
$E_7[-\zeta](u)$ & $0$ & $\phantom{-}q^{7/2}\zeta$ & $0$ & $-q^{7/2}\zeta$ \\
\hline
\end{tabular}
\end{center}
where $u\sim u_a$ means that $u$ lies in the $\mf G^F$-conjugacy class corresponding to $a\in A(u_0)$ as described in \Cref{valcuspchar}.
\end{Empty}

\subsection*{Acknowledgements}
I thank Meinolf Geck for many comments and hints, as well as Gunter Malle for comments on an earlier version. I also wish to thank an unknown referee for detailed suggestions on how to improve the paper. This work was supported by DFG SFB-TRR 195.

\bibliographystyle{hacm}

\bibliography{E7p2-arXiv-v3}

\end{document}